\titleformat*{\section}{\large\bfseries}
\newtheorem{theorem}{Theorem}[section]
\newtheorem{lemma}[theorem]{Lemma}
\newtheorem{definition}[theorem]{Definition}
\newtheorem{example}[theorem]{Example}
\numberwithin{equation}{section}
\title{Properties $(BR)$ and $(BgR)$ for Bounded Linear Operators
}
\author{\large Anuradha Gupta$^1$ and Ankit Kumar$^2$\\ $^1$ {\small Department of Mathematics, Delhi College of Arts and Commerce,}\\ {\small University of Delhi, Netaji Nagar, New Delhi-110023, India.}\\{\small E-mail:dishna2@yahoo.in}\\{\small $^2$Department of Mathematics, University of Delhi, New Delhi-110007, India.}\\{\small E-mail:1995ankit13@gmail.com}}
\date{}
\begin{document}
\maketitle
\begin{abstract}
In this paper we introduce the notion of property $(BR)$ and property $(BgR)$ for bounded linear operators defined on an infinite-dimensional Banach space. These properties in connection with Weyl type theorems and  in the frame of polaroid operators are investigated. Moreover, we study the stability of these properties under perturbations by   commuting finite-dimensional, quasi-nilpotent, Riesz and algebraic operators.\\
\textbf{Mathematics Subject Classification (2010):} Primary 47A10, 47A11; Secondary 47A53, 47A55.\\
\textbf{Keywords:}Property $(BR)$, Property $(BgR)$, Weyl type theorems, SVEP, Polaroid operators, Perturbation Theory. 
\end{abstract}

\section{Introduction and Preliminaries}
Throughout this paper, $X$ denotes an infinite-dimensional Banach space and let $B(X)$ be the Banach algebra of all bounded linear operators on $X$. For $T \in B(X),$ we denote the null space, range of $T$, adjoint of $T$ by $N(T), T(X)$ and $T^{*}$, respectively. Let $\sigma(T), \sigma_{a}(T),$ iso $\sigma(T)$ and iso $\sigma_{a}(T)$ denote the spectrum of $T$, approximate point spectrum of $T$, isolated points of spectrum and isolated points of approximate point spectrum, respectively. Let $\alpha(T)$= dim $N(T)$ and $\beta(T)$= codim $T(X)$ be the nullity of $T$ and deficiency of $T,$ respectively. An operator $T \in B(X)$ is called an upper semi-Fredholm operator if $\alpha(T) < \infty $ and $T(X)$ is closed . An operator $T \in B(X)$ is called a lower semi-Fredholm operator if $\beta(T) < \infty $. The  class of all upper semi-Fredholm operators (resp. lower semi-Fredholm operators) is denoted by $\phi_{+} (X)$ (resp. $\phi_{-}(X)$).   Let $\phi_\pm(X):=\phi_{+}(X)\cup \phi_{-}(X)$ be the class of all semi-Fredholm operators. For $T\in \phi_\pm(X)$ the index of $T$ is defined by ind $(T$):= $\alpha(T)-\beta(T)$. The class of all Fredholm operators is defined by $\phi(X):=\phi_{+}(X)\cap \phi_{-}(X)$.  The class of all upper semi-Weyl operators (resp. lower semi-Weyl operators) is defined by $W_{+} (X)=\{T\in\phi_{+} (X):$ ind $(T) \leq 0\} $ (resp. $W_{-} (X)=\{T\in\phi_{-} (X):$ ind $(T) \geq 0\}.$ The set of all Weyl operators is defined by $W(X):=W_{+}(X) \cap W_{-}(X)=\{T\in \phi(X):$ ind $(T)=0\}$.
The \emph{upper semi-Weyl spectrum} is defined by \[
\sigma_{uw}(T):= \{ \lambda \in \mathbb{C}: \lambda I-T \notin W_{+}(X)\}.
\]
The \emph{upper semi-Weyl spectrum} is defined by  \[ \sigma_{lw}(T):=\{ \lambda \in \mathbb{C}: \lambda I-T \notin W_{-}(X)\}
\]
and the \emph{Weyl spectrum} is defined by  \[ \sigma_w(T):=\{\lambda \in \mathbb{C}: \lambda I-T \notin W(X)\}.\]
For $T \in B(X)$, $p(T)$ be the ascent of $T$ defined as  the smallest non negative integer $p$ such that $N(T^{p})= N(T^{p+1})$. If  no such integer exists, then we set $p(T)=\infty$. Similarly,  $q(T)$ be the descent of $T$ defined as the smallest non negative integer $q$ such that $T^{q}(X)=T^{q+1}(X)$. If no such integer exists, then we set $q(T)=\infty$.  By \cite[Theorem 3.3]{1} if both $p(T)$ and $q(T)$ are finite then $p(T)=q(T)$. It is well known that  $\lambda$ is a pole of resolvent of $T$ if and only if $0 < p(\lambda I-T)=q(\lambda I-T)<\infty$. An operator $T \in B(X)$ is said to be left Drazin invertible if $p(T)<\infty$ and $T^{p+1}(X)$ is closed.  We say that $\lambda \in \sigma_{a}(T)$ is a left pole of $T$ if $\lambda I-T$ is left Drazin invertible. An operator  $T \in \mathcal{B}(X)$ is said to be Drazin invertible if $p(T)=q(T)< \infty$. The \emph{left Drazin spectrum} is defined by  $$\sigma_{ld}(T):=\{\lambda \in \mathbb{C}: \lambda I-T \thinspace \mbox{is not left Drazin invertible}\}.$$
The \emph{Drazin  spectrum} is defined by  $$\sigma_{d}(T):=\{\lambda \in \mathbb{C}: \lambda I-T \thinspace \mbox{is not  Drazin invertible}\}.$$Clearly, $\sigma_{ld}(T)\subset \sigma_{d}(T)$. The class of all upper semi-Browder operators (resp. lower semi-Browder operators) is defined by $B_{+}(X):=\{T\in\phi_{+} (X): p(T) < \infty\}$ (resp. $B_{+}(X):=\{T\in\phi_{+} (X): q(T) < \infty\})$.  The class of all Browder operators is defined by $B(X):=B_{+}(X) \cap B_{-}(X)$. The \emph{upper semi Browder spectrum} is defined by  $$\sigma_{ub}(T):=\{\lambda \in \mathbb{C}: \lambda I-T \notin B_{+}(X)\},$$
the \emph{lower semi-Browder spectrum} is defined by  $$\sigma_{lb}(T):=\{\lambda \in \mathbb{C}: \lambda I-T \notin B_{-}(X)\}$$
and the \emph{Browder spectrum} is defined by  $$\sigma_b(T):=\{\lambda \in \mathbb{C}: \lambda I-T \notin B(X)\}.$$
For a bounded linear operator $T$  and a non negative integer $n$, define $T_{[n]}$ to be the restriction of $T$ to $T^n(X)$. If for some non negative integer $n$ the range space $T^n(X)$ is closed and $T_{[n]}$ is Fredholm (resp. an upper semi B-Fredholm, a lower semi B-Fredholm) then $T$ is said to be B-Fredholm (resp. an upper semi B-Fredholm, a lower semi B-Fredholm). In this case (see \cite{9}), the index of $T$ is defined as index of $T_{[n]}$. An operator $T \in B(X)$ is called   an upper semi B-Weyl (resp. a lower semi B-Weyl) if it is an upper semi B-Fredholm (resp. a lower semi B-Weyl)  having ind $(T)\leq 0$ (resp. ind $(T) \geq 0$). An operator $T \in B(X)$ is called B-Weyl  operator if it is B-Fredholm  having ind $(T)=0$. The \emph{upper semi B-Weyl spectrum} is defined by  $$\sigma_{usbw}(T):=\{\lambda \in \mathbb{C}: \lambda I-T \thinspace \mbox{is not upper semi B-Weyl}\}$$
 and the \emph{B-Weyl spectrum} is defined by  $$\sigma_{bw}(T):=\{\lambda \in \mathbb{C}: \lambda I-T \thinspace \mbox{is not B-Weyl}\}.$$
 An operator $T \in B(X)$ is said to have the single-valued extension property  at $\lambda_{0} \in \mathbb{C}$, abbreviated $T$ has SVEP at $\lambda_{0}$, if for every neighborhood $U$ of $\lambda_{0}$ the only analytic function $f:U \rightarrow X $ which satisfies the equation $(\lambda I-T)f(\lambda)=0$ is the function $f=0$. An operator $T$ is said to have SVEP if $T$ has SVEP at every $\lambda \in \mathbb{C}$. It is known that $$p(\lambda I-T)<\infty \Rightarrow T \thinspace \emph{has SVEP at}\thinspace\thinspace \lambda$$ and $$q(\lambda I-T)<\infty \Rightarrow T^* \thinspace \emph{has SVEP at}\thinspace\thinspace \lambda.$$ \\
 The quasi-nilpotent part of $T$, defined as$$H_{0}(T):=\{x\in X:\lim\limits_{n \rightarrow \infty} \Vert T^{n} x \Vert^{\frac{1}{n}}=0\}.$$
 Clearly, $H_0(T)$ is $T$-invariant subspace. 
 Let  $\Pi(T)=\sigma(T) \setminus \sigma_{d}(T)$ denote the set of all poles of $T$ and let $\Pi^{a}(T)=\sigma_a(T) \setminus \sigma_{ld}(T)$ denote the set of all left poles of $T$. Clearly, $\Pi(T) \subset \Pi^{a}(T)$. An operator $T \in B(X)$ is said to be polaroid if every isolated point of $\sigma (T)$ is a pole of the resolvent of $T$ and said to be $a$-polaroid operator if every isolated point of $\sigma_a (T)$ is a pole of the resolvent of $T$. Clearly, $$T \emph{\thinspace is \thinspace a-polaroid}  \Rightarrow T \emph{\thinspace is\thinspace polaroid.}$$
 A part of an operator is its restriction to a closed invariant subspace. An operator $T \in B(X)$ is hereditarily polaroid if every part of $T$ is polaroid. Clearly, every hereditarily polaroid is polaroid.\\ 
  Let $\bigtriangleup(T) = \sigma(T)\setminus \sigma_{w}(T)$, 
 $\bigtriangleup_{a}(T) = \sigma_{a}(T)\setminus \sigma_{uw}(T).$ We say  that $T$ satisfies Weyl's theorem if $\bigtriangleup(T)=\Pi_{00}(T)$, where $\Pi_{00}(T):=\{\lambda \in \mathbb{C}:\lambda \in$ iso $\sigma(T)$ and $0< \alpha (\lambda I-T) < \infty\}$ and that $T$ satisfies a-Weyl's theorem if $\bigtriangleup_a(T)=\Pi_{00}^{a}(T)$ where $\Pi_{00}^{a}(T):=\{\lambda \in \mathbb{C}:\lambda \in \mbox{iso}\sigma_{a}(T)\thinspace \mbox{and}\thinspace 0< \alpha    (\lambda I-T) < \infty\}.$ It is well known that  a-Weyl's theorem implies Weyl's theorem. 	
 Let $\Pi_{0}(T)=\sigma(T)\setminus \sigma_{b}(T)$ and $\Pi^{a}_{0}(T)=\sigma_{a}(T)\setminus \sigma_{ub}(T)$. We say that Browder's theorem holds for $T\in B(X)$ if $\bigtriangleup(T)=\Pi_{0}(T)$ and that a-Browder's theorem holds for $T\in B(X)$ if  $\bigtriangleup_{a}(T)=\Pi^{a}_{0}(T) $. It is well known that $a$-Browder's theorem implies Browder's theorem. Following Berkani \cite{10} we say that generalized Browder's theorem holds for $T\in B(X)$ if $\sigma_{bw}(T)=\sigma_{d}(T)$.  It is proved in \cite[Theorem 2.1]{8} that generalized Browder's theorem is equivalent to  Browder's theorem. It is said that generalized $a$-Browder's theorem holds for $T\in B(X)$ if $\sigma_{usbw}(T)=\sigma_{ld}$. It is proved in \cite[Theorem 2.2]{8} that generalized $a$-Browder's theorem is equivalent to  $a$-Browder's theorem.
 Following Rako$\check{\mbox{c}}$evic$\acute{\mbox{c}}$ \cite{20} $T \in B(X)$ is said to possess property $(w)$ if  $\bigtriangleup_{a}(T)=\Pi_{00}(T)$. It is well known that property $(w)$ implies a-Browder's theorem.
 
 Following Aiena et.al. \cite{4} we say that $T \in B(X)$ possesses property ($R)$ if $\Pi_{0}(T)=\Pi_{00}(T)$. According to Rashid \cite{16},  property ($S)$ holds for $T \in B(X)$ if $\sigma(T) \setminus \sigma_{b}(T)=\Pi_{00}(T)$. It is shown in \cite[Theorem 2.10]{16} that property $(R)$ implies property $(S)$ but the converse is not true in general. According to Gupta and Kashyap \cite{14}, property ($Bw)$ holds for $T \in B(X)$ if $\sigma(T)\setminus \sigma_{bw}(T)=\Pi_{00}(T)$. Recall \cite{17} that property ($Bgw)$ holds for $T \in B(X)$ if $\sigma_{a}(T)\setminus \sigma_{usbw}(T)=\Pi_{00}(T)$. It is shown in \cite[Theorem 2.16]{17}, that property $(Bgw)$ implies property $(Bw)$ but the converse is not true in general. Following Zariouh \cite{18} we say that property $(Z_{\Pi a})$ holds for $T$ if $\bigtriangleup(T)=\Pi^{a}(T)$.
 
In this paper, we introduce properties $(BR)$ and $(BgR)$. These properties are studied in connection with Weyl type theorems. We have shown property $(BgR)$ imples property $(BR)$ but the converse is not true in general. Property $(BgR)$ and property $(BR)$ are related to property $(R)$ and property $(S)$, respectively. Also, these properties are investigated in the framework of polaroid type  operators. In the last section, we have examined the stability of these properties under perturbations by   commuting finite-dimensional, quasi-nilpotent, Riesz and algebraic operators. 
  \section{Main Results}
\begin{definition}
\emph{A bounded linear operator $T$ is said to satisfy property $(BgR)$ if $\Pi^{a}(T)=\Pi_{00}(T)$ and is said to satisfy property $(BR)$ if $\Pi(T)=\Pi_{00}(T)$.}
\end{definition}
\begin{theorem}\label{theorem1}
Let $T \in B(X)$. If $T$ satisfies property $(BgR)$, then $T$ satisfies property $(BR)$.
\end{theorem}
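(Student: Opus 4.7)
The inclusion $\Pi(T)\subseteq\Pi_{00}(T)$ is essentially free: every pole is a left pole, so $\Pi(T)\subseteq\Pi^{a}(T)$, and the hypothesis $\Pi^{a}(T)=\Pi_{00}(T)$ immediately yields $\Pi(T)\subseteq\Pi_{00}(T)$. The substantive half of the theorem is therefore the reverse inclusion $\Pi_{00}(T)\subseteq\Pi(T)$, and that is where the plan should concentrate.

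To establish $\Pi_{00}(T)\subseteq\Pi(T)$, fix $\lambda\in\Pi_{00}(T)$; the goal is to promote the information ``$\lambda$ is a left pole'' to ``$\lambda$ is a pole.'' Two pieces of data are in hand. First, the hypothesis gives $\lambda\in\Pi^{a}(T)$, so $\lambda I-T$ is left Drazin invertible: $p(\lambda I-T)<\infty$ and $(\lambda I-T)^{p+1}(X)$ is closed, which in particular puts $\lambda I-T$ in the upper semi B-Fredholm class. Second, by the very definition of $\Pi_{00}(T)$, $\lambda$ is an isolated point of $\sigma(T)$; this forces $T^{*}$ to enjoy SVEP at $\lambda$.

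The crux is to combine these two ingredients to deduce $q(\lambda I-T)<\infty$. In the semi B-Fredholm setting, the implication ``$q(\lambda I-T)<\infty \Rightarrow T^{*}$ has SVEP at $\lambda$'' recalled in the preliminaries is in fact an equivalence, so SVEP of $T^{*}$ at $\lambda$ together with $\lambda I-T$ being upper semi B-Fredholm delivers finite descent. Coupling this with $p(\lambda I-T)<\infty$ and invoking \cite[Theorem 3.3]{1} gives $p(\lambda I-T)=q(\lambda I-T)<\infty$, so $\lambda$ is a pole and $\lambda\in\Pi(T)$, as required.

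The main obstacle, modest as it is, is precisely the reverse of the SVEP implication stated in the preliminaries: SVEP of $T^{*}$ by itself does not control the descent, and one must invoke the upper semi B-Fredholm structure inherited from left Drazin invertibility to close the loop. Once that is granted, the remainder is a short set-theoretic chase between the three collections $\Pi(T)$, $\Pi^{a}(T)$ and $\Pi_{00}(T)$.
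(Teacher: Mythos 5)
Your proposal is correct in outline, but it follows a genuinely different route from the paper's, and its pivotal step deserves a more honest accounting. The forward inclusion $\Pi(T)\subseteq\Pi_{00}(T)$ is handled identically in both arguments. For the reverse inclusion, the paper exploits the finiteness of $\alpha(\lambda I-T)$ built into $\Pi_{00}(T)$: a left pole with finite-dimensional kernel is upper semi-Browder (this is \cite[Lemma 2.4]{11}), and then the classical semi-Fredholm result \cite[Corollary 3.21]{1}, together with the SVEP of $T$ and $T^{*}$ at the isolated point $\lambda$, yields that $\lambda$ is a pole. You instead stay entirely in the semi B-Fredholm framework, never use $\alpha(\lambda I-T)<\infty$, and deduce $q(\lambda I-T)<\infty$ from SVEP of $T^{*}$ at $\lambda$ plus upper semi B-Fredholmness; this in fact proves the stronger statement that every left pole which is isolated in $\sigma(T)$ is a pole. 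The price is that your crux --- ``for $\lambda I-T$ upper semi B-Fredholm, SVEP of $T^{*}$ at $\lambda$ implies $q(\lambda I-T)<\infty$'' --- is not, as you suggest, an upgrade of the implication recalled in the preliminaries: without the B-Fredholm hypothesis the converse implication is simply false, and with it the equivalence is a substantive theorem of the quasi-Fredholm/localized-SVEP theory (the classical semi-Fredholm case is essentially \cite[Theorem 3.17]{1}, and the B-Fredholm extensions live in the same circle of ideas as \cite{23}), resting on a Kato-type decomposition or the punctured-neighbourhood theorem for semi B-Fredholm operators rather than on anything stated in this paper. So either cite that result explicitly, or note that since $\alpha(\lambda I-T)<\infty$ is available anyway, you can sidestep the heavier machinery exactly as the paper does: left Drazin invertibility plus finite kernel puts $\lambda I-T$ in the classical upper semi-Browder class, after which \cite[Corollary 3.21]{1} finishes the argument.
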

\begin{proof}
Suppose that $T$ satisfies property $(BgR)$, then $\Pi^{a}(T)=\Pi_{00}(T)$. Let $\lambda \in  \Pi(T)$. Since $\Pi(T) \subset \Pi^{a}(T)$, $\lambda \in \Pi^{a}(T)= \Pi_{00}(T)$. Now, let $\lambda \in \Pi_{00}(T)$ which implies that $\lambda \in$ iso $\sigma(T) $ and $0< \alpha(\lambda I-T) < \infty$. As $T$ satisfies property $(BgR)$, $\lambda \in \Pi^{a}(T)$. Therefore, $\lambda \notin \sigma_{ld}(T)$. By \cite[Lemma 2.4]{11}, $ \lambda \notin \sigma_{ub}(T)$. Hence, by \cite[Corollary 3.21]{1} we get $\lambda \in \Pi(T)$. 
\end{proof}
The converse of Theorem \ref{theorem1} does not hold in general as shown by the following example:
\begin{example}\label{example1}
\emph{Let $R: l^2(\mathbb{N}) \rightarrow l^2(\mathbb{N})$ be the classical unilateral right shift and $Q: l^2(\mathbb{N}) \rightarrow l^2(\mathbb{N})$ be defined as $Q(x_1,x_2, \ldots)=(\frac{1}{2} x_1, x_2, \ldots$). If $T:=R \oplus Q$. Then $\sigma(T)= D$, where $D$ is the  closed unit disk and $\sigma_{a}(T)= S^{1} \cup \{ \frac{1}{2}\}$, where $S^{1}$ is the unit circle. Moreover, $\Pi_{00}(T)=\Pi(T)=\emptyset$ and $\Pi^{a}(T)=\{\frac{1}{2}\}.$ Therefore, $T$ satisfies property $(BR)$ but does not satisfy property $(BgR)$.}
\end{example}
\begin{lemma}\label{lemma1}
Let $T \in B(X)$, then $T^{*}$ has SVEP at $\lambda \notin \sigma_{ld}(T) $ if and only if $\sigma_{ld}(T)=\sigma_{d}(T).$ 
\end{lemma}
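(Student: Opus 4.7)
The statement reads most naturally as: $T^{*}$ has SVEP at every $\lambda \notin \sigma_{ld}(T)$ if and only if $\sigma_{ld}(T)=\sigma_{d}(T)$. I would prove the two directions separately.

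For the direction ($\Rightarrow$), the plan is to observe that the inclusion $\sigma_{ld}(T) \subseteq \sigma_{d}(T)$ always holds (Drazin invertibility, i.e.\ $p(\lambda I-T)=q(\lambda I-T)<\infty$, forces $(\lambda I-T)^{p+1}(X)=(\lambda I-T)^{p}(X)$, which is closed by the classical Kato--West decomposition $X=N((\lambda I-T)^{p})\oplus (\lambda I-T)^{p}(X)$). So the work is in the reverse inclusion $\sigma_{d}(T)\subseteq \sigma_{ld}(T)$. Take $\lambda \notin \sigma_{ld}(T)$; then $\lambda I-T$ is left Drazin invertible, so $p(\lambda I-T)<\infty$ and $(\lambda I-T)^{p+1}(X)$ is closed, which in particular makes $\lambda I-T$ upper semi B-Fredholm. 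The hypothesis gives SVEP of $T^{*}$ at $\lambda$.

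The key ingredient I would invoke here is the standard dual characterisation (see e.g.\ Aiena's monograph): for a semi B-Fredholm operator $\lambda I-T$, SVEP of $T^{*}$ at $\lambda$ is equivalent to $q(\lambda I-T)<\infty$. Combined with $p(\lambda I-T)<\infty$, this yields $p(\lambda I-T)=q(\lambda I-T)<\infty$, hence $\lambda \notin \sigma_{d}(T)$. The direction ($\Leftarrow$) is immediate: if $\sigma_{ld}(T)=\sigma_{d}(T)$ and $\lambda \notin \sigma_{ld}(T)$, then $\lambda I-T$ is Drazin invertible, so $q(\lambda I-T)<\infty$, and by the general implication noted in the preliminaries, $T^{*}$ has SVEP at $\lambda$.

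The main obstacle, and really the only non-routine step, is the upgrade from SVEP of $T^{*}$ at $\lambda$ to finiteness of $q(\lambda I-T)$ on the upper semi B-Fredholm part; this requires a generalised Kato type decomposition and is not proved from scratch but cited as a known characterisation. Everything else is bookkeeping with the definitions of $\sigma_{ld}$ and $\sigma_{d}$ together with the preliminary implications between finite ascent/descent and SVEP of $T$ and $T^{*}$.
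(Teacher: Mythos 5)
Your proof is correct, but it follows a genuinely different route from the paper's. The paper's forward direction first establishes $\sigma_{a}(T)=\sigma(T)$ (if $\lambda\in\sigma(T)\setminus\sigma_{a}(T)$ then both $T$ and $T^{*}$ have SVEP at $\lambda$, so by Aiena's Corollary~3.21 $\lambda$ would be a pole, hence in $\sigma_{a}(T)$, a contradiction), and then works with the decompositions $\sigma_{ld}(T)=\sigma_{usbw}(T)\cup\operatorname{acc}\sigma_{a}(T)$ and $\sigma_{d}(T)=\sigma_{bw}(T)\cup\operatorname{acc}\sigma(T)$, invoking Berkani--Castro--Djordjevi\'c to pass from ``upper semi B-Weyl plus SVEP of $T^{*}$ at $\lambda$'' to ``B-Weyl''. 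You instead go straight at the ascent/descent: left Drazin invertibility gives $p(\lambda I-T)<\infty$ and (by the standard identification of left Drazin invertible with upper semi B-Browder) $\lambda I-T$ is semi B-Fredholm, and then you invoke the localized-SVEP characterization that for a semi B-Fredholm (quasi-Fredholm, topological uniform descent) operator, SVEP of $T^{*}$ at $\lambda$ is equivalent to $q(\lambda I-T)<\infty$; this yields $p=q<\infty$ at once, bypassing both the $\sigma_{a}(T)=\sigma(T)$ step and the accumulation-point bookkeeping. Both arguments ultimately rest on a cited ``SVEP at a semi B-Fredholm point'' result; yours is shorter and more transparent but uses the stronger B-Fredholm-type equivalence (found in the later literature, e.g.\ Aiena--Aponte--Balzan or Zeng--Jiang--Zhong, rather than the 2004 monograph you gesture at, whose Theorems~3.17/3.19 cover only the classical semi-Fredholm case), while the paper's longer argument has the by-product $\sigma_{a}(T)=\sigma(T)$, which the authors explicitly reuse later (in the proof of Theorem~\ref{theorem14}). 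Your converse direction coincides with the paper's; your parenthetical justification of $\sigma_{ld}(T)\subset\sigma_{d}(T)$ is fine (it is the Riesz decomposition at a pole), and in any case the paper assumes this inclusion as known.
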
 
\begin{proof}
Suppose that $T^{*}$ has SVEP at $\lambda \notin \sigma_{ld}(T) $. First we prove that $\sigma_{a}(T)=\sigma(T)$. Let $\lambda \notin \sigma_{a}(T) $ and $\lambda \in \sigma(T)$ which implies that $\lambda \notin \sigma_{ld}(T)$. Therefore, $p(\lambda I-T) < \infty$. Thus, $T$ has SVEP at $\lambda$. Also, $T^{*}$ has SVEP at $\lambda$. Thus, by \cite[Corollary 3.21]{1} we get $\lambda \in \Pi(T) \subset \sigma_{a}(T)$, a contradiction. Hence, $\sigma_{a}(T)=\sigma(T)$. Let $\lambda \notin \sigma_{ld}(T).$ As $\sigma_{ld}(T)=\sigma_{usbw}(T)$ $ \cup$ acc $\sigma_{a}(T)$, $\lambda \notin$ acc$\sigma_{a}(T)$ and $\lambda \notin \sigma_{usbw}(T)$. Since $\lambda \notin \sigma_{usbw}(T)$, $(\lambda I-T)$ is an upper semi B-Fredholm operator and ind $(\lambda I-T) \leq  0.$  Therefore, by \cite[Corollary 2.8]{23} we get $\lambda \notin \sigma_{bw}(T)$. As $\sigma_{a}(T)=\sigma(T)$, $\lambda \notin $ acc$\sigma(T)$ and it is known that $\sigma_{d}(T)=\sigma_{bw}(T)$ $\cup $ acc $\sigma(T)$ which gives $\lambda \notin \sigma_{d}(T).$ Hence, $\sigma_{ld}(T)=\sigma_{d}(T)$.

Conversely, let $\lambda \notin \sigma_{ld}(T)$. As  $\sigma_{ld}(T)=\sigma_{d}(T)$, $\lambda \notin \sigma_{d}(T)$ which implies that $q(\lambda I-T) < \infty$. Therefore, $T^{*}$ has SVEP at $\lambda$.
\end{proof}
\begin{theorem}\label{theorem2}
Let $T \in B(X)$. If $T$ satisfies property $(BR)$ and $T^{*}$ has SVEP at $\lambda \notin \sigma_{ld}(T) $, then $T$ satisfies property $(BgR)$.
\begin{proof}
By Lemma \ref{lemma1} $\sigma_{a}(T)=\sigma(T)$ and $\sigma_{ld}(T)=\sigma_{d}(T)$. Since  $T$ satisfies property $(BR)$  which gives $T$ satisfies property $(BgR)$.
\end{proof}
\end{theorem}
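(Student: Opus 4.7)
The plan is to reduce property $(BgR)$ to property $(BR)$ by proving that, under the SVEP hypothesis, the sets $\Pi^{a}(T)$ and $\Pi(T)$ actually coincide, after which property $(BR)$ immediately yields what we want. This is essentially a set-theoretic manipulation with Lemma \ref{lemma1} doing the heavy lifting.

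First I would unwind the definitions. Recall $\Pi^{a}(T) = \sigma_{a}(T)\setminus \sigma_{ld}(T)$ and $\Pi(T) = \sigma(T)\setminus \sigma_{d}(T)$, and note the standing inclusion $\Pi(T)\subset\Pi^{a}(T)$. So to conclude $\Pi^{a}(T) = \Pi_{00}(T)$ it suffices, given property $(BR)$, to prove the reverse inclusion $\Pi^{a}(T)\subset \Pi(T)$.

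The next step is to invoke Lemma \ref{lemma1} on the SVEP hypothesis to obtain the equality $\sigma_{ld}(T) = \sigma_{d}(T)$. Combining this with the trivial inclusion $\sigma_{a}(T)\subset \sigma(T)$, I would write
\[
\Pi^{a}(T) \;=\; \sigma_{a}(T)\setminus \sigma_{ld}(T) \;=\; \sigma_{a}(T)\setminus \sigma_{d}(T) \;\subset\; \sigma(T)\setminus \sigma_{d}(T) \;=\; \Pi(T),
\]
which together with the standing reverse containment yields $\Pi^{a}(T) = \Pi(T)$. Now property $(BR)$ gives $\Pi(T) = \Pi_{00}(T)$, hence $\Pi^{a}(T) = \Pi_{00}(T)$, i.e.\ property $(BgR)$.

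I do not anticipate a real obstacle here: the content sits entirely inside Lemma \ref{lemma1}, and once $\sigma_{ld}(T)=\sigma_{d}(T)$ is in hand the argument is a two-line chase of inclusions. The only thing to be careful about is not to mistakenly invoke the full conclusion $\sigma_{a}(T)=\sigma(T)$ from the proof of Lemma \ref{lemma1}; it is cleaner (and sufficient) to use only the stated conclusion $\sigma_{ld}(T)=\sigma_{d}(T)$ together with $\sigma_{a}(T)\subset\sigma(T)$.
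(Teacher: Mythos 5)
Your proof is correct and follows essentially the same route as the paper: both reduce property $(BgR)$ to property $(BR)$ by using Lemma \ref{lemma1} to identify $\Pi^{a}(T)$ with $\Pi(T)$. If anything, your version is slightly tidier, since the paper's one-line proof also invokes $\sigma_{a}(T)=\sigma(T)$, which is established only inside the proof of Lemma \ref{lemma1} rather than in its statement, whereas you obtain $\Pi^{a}(T)\subset\Pi(T)$ from the stated conclusion $\sigma_{ld}(T)=\sigma_{d}(T)$ together with the trivial inclusion $\sigma_{a}(T)\subset\sigma(T)$ and the standing containment $\Pi(T)\subset\Pi^{a}(T)$.
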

Recall that $T\in B(X)$ is said to be isoloid if every isolated point of $\sigma(T)$ is an eigenvalue of $T$. Clearly, every polaroid operator is isoloid. An operator $T\in B(X)$ is said to be finite-isoloid if every isolated point is an eigen value of finite multiplicity, i.e. $0 < \alpha(\lambda I-T) < \infty$.
\begin{theorem}\label{theorem3}
Let $T \in B(X)$, then
 
(i) If $T$ is polaroid and finite-isoloid, then $T$ satisfies property $(BR)$.

(ii) If $T$ is $a$-polaroid and finite-isoloid, then $T$ satisfies property $(BgR)$.
\end{theorem}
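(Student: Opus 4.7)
The plan is to prove each part by establishing both inclusions in the definition, exploiting the polaroid-type hypothesis for the direction $\Pi_{00}(T)\subseteq \Pi(T)$ (respectively $\Pi^{a}(T)$), and the finite-isoloid hypothesis for the direction into $\Pi_{00}(T)$.

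For part (i), I would first show $\Pi(T)\subseteq \Pi_{00}(T)$. If $\lambda\in\Pi(T)$, then $\lambda$ is a pole of the resolvent, so $0<p(\lambda I-T)=q(\lambda I-T)<\infty$; in particular $\lambda\in\mathrm{iso}\,\sigma(T)$ and $\alpha(\lambda I-T)>0$. The finite-isoloid assumption then forces $\alpha(\lambda I-T)<\infty$, giving $\lambda\in\Pi_{00}(T)$. For the reverse inclusion, take $\lambda\in\Pi_{00}(T)$; then $\lambda\in\mathrm{iso}\,\sigma(T)$, and the polaroid hypothesis immediately yields that $\lambda$ is a pole of the resolvent, i.e.\ $\lambda\in\Pi(T)$.

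For part (ii), the key extra ingredient is the passage between $\mathrm{iso}\,\sigma_{a}(T)$ and $\mathrm{iso}\,\sigma(T)$. For $\Pi^{a}(T)\subseteq\Pi_{00}(T)$: if $\lambda\in\Pi^{a}(T)$ then $\lambda I-T$ is left Drazin invertible, which makes $\lambda$ an isolated point of $\sigma_{a}(T)$ and an eigenvalue. Since $T$ is $a$-polaroid, $\lambda$ is in fact a pole of the resolvent, so $\lambda\in\mathrm{iso}\,\sigma(T)$ with $\alpha(\lambda I-T)>0$; finite-isoloid then supplies $\alpha(\lambda I-T)<\infty$, so $\lambda\in\Pi_{00}(T)$. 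Conversely, for $\Pi_{00}(T)\subseteq\Pi^{a}(T)$: if $\lambda\in\Pi_{00}(T)$, then $\lambda\in\mathrm{iso}\,\sigma(T)$ and $\alpha(\lambda I-T)>0$, hence $\lambda\in\sigma_{a}(T)$; since $\sigma_{a}(T)\subseteq\sigma(T)$, isolation in $\sigma(T)$ propagates to isolation in $\sigma_{a}(T)$. The $a$-polaroid hypothesis then makes $\lambda$ a pole, and in particular a left pole, so $\lambda\in\Pi^{a}(T)$.

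No step looks combinatorially heavy; the mildest obstacle is simply to cite (or recall) that a left pole is automatically an isolated point of $\sigma_{a}(T)$ and an eigenvalue, and that an isolated point of a larger spectrum remains isolated in every subset containing it. Both facts are standard in Fredholm theory and available from the references already cited in the introduction, so the argument should close cleanly without any subtlety beyond careful bookkeeping of the two inclusions in each part.
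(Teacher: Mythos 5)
Your argument is correct and follows essentially the same route as the paper: in each part, the polaroid (resp.\ $a$-polaroid) hypothesis gives the inclusion of $\Pi_{00}(T)$ into the set of (left) poles, and the finite-isoloid hypothesis gives the reverse inclusion, using that a (left) pole is an isolated point of $\sigma(T)$ (resp.\ $\sigma_{a}(T)$). The only difference is that you spell out the small standard facts (a left pole is isolated in $\sigma_{a}(T)$ and an eigenvalue; isolation in $\sigma(T)$ passes to $\sigma_{a}(T)$) that the paper leaves implicit.
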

\begin{proof}
(i) Let $\lambda \in  \Pi(T)$. Since $T$ is finite-isoloid, $\lambda \in \Pi_{00}(T)$. Now, let $\lambda \in \Pi_{00}(T) \subset$ iso $\sigma(T)$. As $T$ is polaroid, $\lambda \in \Pi(T)$.\\
(ii) Let $\lambda \in \Pi^{a}(T)$ which implies that $\lambda \in \sigma_{a}(T)$ and $\lambda \notin \sigma_{ld}(T)$. This gives $\lambda \in$ iso$\sigma_{a}(T)$. As $T$ is $a$-polaroid and finite-isoloid, $\lambda \in \Pi_{00}(T)$. Now, let $\lambda \in \Pi_{00}(T)$ which  gives $\lambda \in$ iso $\sigma_{a}(T)$. Since $T$ is $a$-polaroid, $\lambda \in \Pi(T) \subset \Pi^{a}(T)$.
\end{proof}
\begin{theorem}\label{theorem4}
Let $T \in B(X)$. If $T$ satisfies property $(BgR)$, then $T$ satisfies property ($R$).
\end{theorem}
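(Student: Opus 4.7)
The plan is to leverage Theorem \ref{theorem1}: since it has already been shown that property $(BgR)$ implies property $(BR)$, it suffices to establish the implication $(BR) \Rightarrow (R)$. Property $(BR)$ says $\Pi(T) = \Pi_{00}(T)$ while property $(R)$ says $\Pi_0(T) = \Pi_{00}(T)$, so the task reduces to upgrading "pole" to "Browder point" inside the equality.

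The forward inclusion $\Pi_0(T) \subseteq \Pi_{00}(T)$ is immediate from the chain $\Pi_0(T) \subseteq \Pi(T) = \Pi_{00}(T)$, where the first containment reflects the fact that every Browder operator is Drazin invertible and hence a pole. For the reverse inclusion, I would take $\lambda \in \Pi_{00}(T)$; property $(BR)$ places $\lambda$ into $\Pi(T)$, so $\lambda$ is a pole of $T$, i.e.\ $p(\lambda I - T) = q(\lambda I - T) < \infty$. Coupling this with the finite nullity $\alpha(\lambda I - T) < \infty$ built into $\Pi_{00}(T)$, classical Fredholm theory (the equality $\alpha = \beta$ at a pole of finite multiplicity, plus closedness of $(\lambda I - T)(X)$ under finite ascent-descent with finite nullity) forces $\lambda I - T$ to be Browder, so $\lambda \in \Pi_0(T)$. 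Alternatively, one could argue directly from $(BgR)$ by copying the Theorem \ref{theorem1} template: $\lambda \in \Pi_{00}(T) = \Pi^a(T)$ combined with \cite[Lemma 2.4]{11} gives $\lambda \notin \sigma_{ub}(T)$, and SVEP of $T^*$ at the isolated point $\lambda$ together with \cite[Corollary 3.21]{1} then promotes upper semi-Browder to full Browder.

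The main obstacle I foresee is the final upgrade from a pole with finite nullity to a Browder operator. This step is folklore but is the only place where genuine Fredholm-theoretic input is needed; it rests on the classical decomposition $X = N((\lambda I - T)^p) \oplus (\lambda I - T)^p(X)$ at a pole of order $p$, from which $\beta(\lambda I - T) = \alpha(\lambda I - T) < \infty$ and the closedness of the range are extracted. Every remaining step is a formal manipulation of the inclusion chain $\Pi_0(T) \subseteq \Pi(T) \subseteq \Pi^a(T)$ together with the hypothesis $\Pi^a(T) = \Pi_{00}(T)$.
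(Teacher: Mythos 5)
Your individual steps are sound, but the proof is aimed at the wrong target set, and the culprit is a typo in the paper's introduction that you took at face value. As printed, the definition of property $(R)$ (namely $\Pi_{0}(T)=\Pi_{00}(T)$) coincides verbatim with property $(S)$; the property $(R)$ of Aiena et al.\ \cite{4} that Theorem \ref{theorem4} is really about is $\Pi^{a}_{0}(T)=\sigma_{a}(T)\setminus\sigma_{ub}(T)=\Pi_{00}(T)$. This is the version the paper actually uses: Example \ref{example2} verifies property $(R)$ through $\Pi_{0}^{a}(T)=\Pi_{00}(T)=\emptyset$, Theorem \ref{theorem5} exploits $\sigma_{ld}(T)=\sigma_{ub}(T)$ obtained from Theorem \ref{theorem4}, and the cited fact that $(R)$ strictly implies $(S)$ would be vacuous under the printed definition. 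The paper's proof of Theorem \ref{theorem4} accordingly shows the spectral equality $\sigma_{ld}(T)=\sigma_{ub}(T)$: the inclusion $\sigma_{ld}(T)\subseteq\sigma_{ub}(T)$ is automatic, and for the reverse one takes $\lambda\notin\sigma_{ld}(T)$; if $\lambda\in\sigma_{a}(T)$ then $\lambda\in\Pi^{a}(T)=\Pi_{00}(T)$ and \cite[Lemma 2.4]{11} yields $\lambda\notin\sigma_{ub}(T)$. This equality converts $\Pi^{a}(T)$ into $\Pi^{a}_{0}(T)$ and hence $(BgR)$ into $(R)$. What you prove instead, $\Pi_{0}(T)=\Pi_{00}(T)$, is property $(S)$, which in the paper is the content of Theorem \ref{theorem6} (via Theorem \ref{theorem1}), not of Theorem \ref{theorem4}.

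The repair is short, and your toolkit already contains it: the inclusion $\Pi^{a}_{0}(T)\subseteq\Pi^{a}(T)=\Pi_{00}(T)$ is immediate from $\sigma_{ld}(T)\subseteq\sigma_{ub}(T)$ together with $(BgR)$, while your key upgrade --- under $(BR)$ every $\lambda\in\Pi_{00}(T)$ is a pole of finite nullity, hence a Browder point --- gives $\lambda\in\sigma_{a}(T)\setminus\sigma_{ub}(T)$ (note $\lambda$ is an eigenvalue, so $\lambda\in\sigma_{a}(T)$), i.e.\ $\Pi_{00}(T)\subseteq\Pi^{a}_{0}(T)$. For the record, that upgrade itself is correct: it is exactly \cite[Corollary 3.21]{1}, resting on the decomposition $X=N((\lambda I-T)^{p})\oplus(\lambda I-T)^{p}(X)$ and $\dim N((\lambda I-T)^{p})\le p\,\alpha(\lambda I-T)<\infty$; and your alternative route via \cite[Lemma 2.4]{11} plus SVEP of $T^{*}$ at the isolated point $\lambda$ is precisely the paper's Theorem \ref{theorem1} argument. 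So the mathematics is salvageable in a couple of sentences, but as written the proposal establishes property $(S)$ rather than the intended property $(R)$.
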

 \begin{proof}
  It suffices to prove that $\sigma_{ld}(T)=\sigma_{ub}(T)$. Let $\lambda \notin \sigma_{ld}(T)$. If $\lambda \in \sigma_{a}(T)$, then $\lambda \in \Pi^{a}(T)= \Pi_{00}(T)$. Therefore, by \cite[Lemma 2.4]{11} $\lambda \notin \sigma_{ub}(T)$. If $\lambda \notin \sigma_{a}(T)$, then $\lambda \notin \sigma_{ub}(T)$. 
 \end{proof}
  The converse of the above theorem is not true in general. This can be illustrated by the following example:
\begin{example}\label{example2}
\emph{Let $T: l^2(\mathbb{N}) \rightarrow l^2(\mathbb{N})$ be defined as $T(x_1,x_2, \ldots)=(0,\frac{-x_1}{2},0,0,\\ \ldots$).  Then $T^2 =0$. Therefore, $T$ is a nilpotent operator. Since $\alpha(T) $ is infinite, $\sigma_a (T)= \sigma_{uw} (T)=\sigma_{ub} (T)=\{0\}$ and $\sigma _{ld} (T)=\emptyset$. Thus, $\Pi_{0}^{a}(T)=\Pi_{00}(T)=\emptyset$ and $\Pi^{a}(T)=\{0\}.$ Hence, $T$ satisfies property ($R$) but $T$ does not satisfy property $(BgR)$.}
\end{example}
\begin{theorem}\label{theorem5}
Let $T \in B(X)$, then the following statements are equivalent:

(i) $T$ satisfies property $(Bgw)$,

(ii) $T$ satisfies property $(BgR)$ and generalized $a$-Browder's theorem,

(iii) $T$ satisfies property (w) and $\sigma_{ld}(T)=\sigma_{ub}(T)$.
\end{theorem}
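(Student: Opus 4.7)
The plan is to prove the cyclic chain of implications (i) $\Rightarrow$ (ii) $\Rightarrow$ (iii) $\Rightarrow$ (i), using throughout the identity $\Pi^{a}(T) = \sigma_{a}(T)\setminus\sigma_{ld}(T)$ and the equivalence of $a$-Browder's theorem with generalized $a$-Browder's theorem recalled in the introduction.

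For (i) $\Rightarrow$ (ii), I would first verify that property $(Bgw)$ forces generalized $a$-Browder's theorem, i.e.\ $\sigma_{usbw}(T)=\sigma_{ld}(T)$. The inclusion $\sigma_{usbw}(T)\subseteq\sigma_{ld}(T)$ is automatic. For the reverse, suppose $\lambda\notin\sigma_{usbw}(T)$. If $\lambda\notin\sigma_{a}(T)$ there is nothing to prove; otherwise $\lambda\in\sigma_{a}(T)\setminus\sigma_{usbw}(T)=\Pi_{00}(T)$, so $\lambda$ is isolated in $\sigma(T)$. Hence $T$ has SVEP at $\lambda$, and combined with the upper semi B-Weyl hypothesis this forces $p(\lambda I-T)<\infty$ with $(\lambda I-T)^{p+1}(X)$ closed, so $\lambda\notin\sigma_{ld}(T)$. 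Once $\sigma_{usbw}(T)=\sigma_{ld}(T)$, the defining equality of $(Bgw)$ rewrites as $\Pi^{a}(T)=\Pi_{00}(T)$, which is $(BgR)$.

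For (ii) $\Rightarrow$ (iii), the equivalence of generalized $a$-Browder's theorem with $a$-Browder's theorem gives $\sigma_{usbw}(T)=\sigma_{ld}(T)$ and $\sigma_{uw}(T)=\sigma_{ub}(T)$. From $(BgR)$, Theorem \ref{theorem4} (whose proof actually establishes the identity $\sigma_{ld}(T)=\sigma_{ub}(T)$) yields the second clause of (iii). Chaining these equalities,
\[
\Pi_{00}(T)=\Pi^{a}(T)=\sigma_{a}(T)\setminus\sigma_{ld}(T)=\sigma_{a}(T)\setminus\sigma_{ub}(T)=\sigma_{a}(T)\setminus\sigma_{uw}(T)=\bigtriangleup_{a}(T),
\]
which is exactly property $(w)$.

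For (iii) $\Rightarrow$ (i), property $(w)$ implies $a$-Browder's theorem (as noted in the introduction), hence also generalized $a$-Browder's theorem; combining this with the assumed $\sigma_{ld}(T)=\sigma_{ub}(T)$ forces all four spectra $\sigma_{uw}(T)$, $\sigma_{ub}(T)$, $\sigma_{ld}(T)$, $\sigma_{usbw}(T)$ to coincide. Therefore $\sigma_{a}(T)\setminus\sigma_{usbw}(T)=\sigma_{a}(T)\setminus\sigma_{uw}(T)=\Pi_{00}(T)$, which is property $(Bgw)$. The main obstacle is the step in (i) $\Rightarrow$ (ii) showing $(Bgw)\Rightarrow$ generalized $a$-Browder's theorem; the remaining work is essentially spectral bookkeeping, pivoting on the equality $\sigma_{ld}(T)=\sigma_{ub}(T)$ extracted from the proof of Theorem \ref{theorem4}.
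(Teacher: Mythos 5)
Your proposal is correct, and its overall skeleton (pivot on generalized $a$-Browder's theorem and on the equality $\sigma_{ld}(T)=\sigma_{ub}(T)$ extracted from the proof of Theorem \ref{theorem4}) is the same as the paper's; the only genuine divergence is in the crucial step that property $(Bgw)$ forces generalized $a$-Browder's theorem. The paper gets this from the decomposition $\sigma_{ld}(T)=\sigma_{usbw}(T)\cup\operatorname{acc}\sigma_{a}(T)$: a point of $\sigma_a(T)\setminus\sigma_{usbw}(T)=\Pi_{00}(T)$ is isolated in $\sigma(T)$, hence not in $\operatorname{acc}\sigma_a(T)$, hence not in $\sigma_{ld}(T)$. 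You instead invoke SVEP at isolated spectral points together with the known fact that an upper semi B-Fredholm operator at a point of SVEP has finite ascent and is left Drazin invertible; this is an equally standard localized-SVEP justification and is valid, though like the paper you should cite the relevant result rather than assert it. Two smaller points in your favor: you organize the argument as a cycle (i)$\Rightarrow$(ii)$\Rightarrow$(iii)$\Rightarrow$(i) rather than two separate equivalences, which shortens the bookkeeping, and you are more precise than the paper in noting that it is the \emph{proof} of Theorem \ref{theorem4} (not property $(R)$ itself) that yields $\sigma_{ld}(T)=\sigma_{ub}(T)$.
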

\begin{proof}
(i) $\Leftrightarrow$ (ii)  Suppose that $T$ satisfies property ($Bgw$). Let $\lambda \in \sigma_{a}(T) \setminus \sigma_{usbw}(T)= \Pi_{00}(T)$. This implies that $\lambda \in$ iso $\sigma_{a}(T)$. It is known that $\sigma_{ld}(T)=\sigma_{usbw}(T)$ $\cup$ acc $\sigma_{a}(T)$ which gives $\lambda \notin \sigma_{ld}(T).$ Therefore, $T$ satisfies generalized $a$- Browder's theorem which gives $T$ satisfies property $(BgR)$. Conversely, let $T$ satisfies property $(BgR)$. Since $T$ satisfies generalized $a$- Browder's theorem. Therefore, $T$ satisfies property $(Bgw)$.\\
(ii) $\Leftrightarrow$ (iii) Suppose that $T$ satisfies property $(BgR)$ and generalized $a$-Browder's theorem. By Theorem \ref{theorem4} $T$ satisfies property $(R)$. This gives $\sigma_{ld}(T)= \sigma_{ub}(T)$. Let $\lambda \in \bigtriangleup_{a}(T)$, then $\lambda \in \sigma_{a}(T)$ and $\lambda \notin \sigma_{uw}(T)$. Since $T$ satisfies generalized $a$-Browder's theorem and we know that generalized $a$-Browder's theorem is equivalent to $a$-Browder's theorem, $\lambda \notin \sigma_{ub}(T)$. This gives $\lambda \in \Pi^a(T)= \Pi_{00}(T)$. Now, let $\lambda \in \Pi_{00}(T)=\Pi^a(T)$ which implies that $\lambda \notin \sigma_{ld}(T)$. As $\sigma_{ld}(T)=\sigma_{ub}(T),$ $\lambda \notin \sigma_{ub}(T)$. This implies that $\lambda \notin \sigma_{uw}(T)$. Therefore, $\lambda \in \bigtriangleup_{a}(T)$. Conversely, let $T$ satisfies property $(w)$. We know that property $(w)$ implies $a$- Browder's theorem which gives $\sigma_{uw}(T)=\sigma_{ub}(T)=\sigma_{ld}(T)$. Therefore, $\Pi^{a}(T)=\Pi_{00}(T)$.
\end{proof}
\begin{theorem}\label{theorem6}
Let $T \in B(X)$. If $T$ satisfies property $(BR)$, then $T$ satisfies property ($S$).
\end{theorem}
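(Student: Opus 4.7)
The plan is to establish the equality $\sigma(T)\setminus\sigma_{b}(T)=\Pi_{00}(T)$ by verifying both inclusions, leveraging the standard characterization that $\lambda I-T$ is Browder if and only if $\lambda$ is a pole of the resolvent of $T$ of finite rank. Property $(BR)$ will only be used as the bridge that toggles between $\Pi(T)$ and $\Pi_{00}(T)$.

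For the inclusion $\sigma(T)\setminus\sigma_{b}(T)\subseteq\Pi_{00}(T)$, I would take $\lambda\notin\sigma_{b}(T)$. Then $\lambda I-T$ is Browder, so in particular $p(\lambda I-T)=q(\lambda I-T)<\infty$, which places $\lambda$ in $\Pi(T)$. Property $(BR)$ then immediately yields $\lambda\in\Pi_{00}(T)$.

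For the reverse inclusion $\Pi_{00}(T)\subseteq\sigma(T)\setminus\sigma_{b}(T)$, let $\lambda\in\Pi_{00}(T)$. Property $(BR)$ forces $\lambda\in\Pi(T)$, so $\lambda I-T$ is Drazin invertible with $p(\lambda I-T)=q(\lambda I-T)<\infty$, while $0<\alpha(\lambda I-T)<\infty$ comes directly from the definition of $\Pi_{00}(T)$. I would then invoke \cite[Corollary 3.21]{1} (which is already used in the proof of Theorem \ref{theorem1} for exactly this kind of passage) to conclude that $\lambda I-T$ is Browder, so $\lambda\in\sigma(T)\setminus\sigma_{b}(T)$.

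There is essentially no obstacle here. The argument is a straightforward application of the classical equivalence between Browder points and finite-rank poles, and it parallels the corresponding \emph{property} $(R)$ versus \emph{property} $(S)$ implication from \cite{16}. The only mildly subtle point is that finite ascent together with $\alpha(\lambda I-T)<\infty$ forces the generalized kernel $N((\lambda I-T)^{p})$ to be finite-dimensional (via the standard bound $\alpha(S^{n})\le n\,\alpha(S)$ when $S$ has finite ascent), which is precisely what is packaged in the cited corollary, so I would not reprove it inline.
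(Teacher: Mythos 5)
Your proposal is correct and follows essentially the same route as the paper: both arguments hinge on using property $(BR)$ to pass between $\Pi(T)$ and $\Pi_{00}(T)$ together with the equivalence (via \cite[Corollary 3.21]{1}) between Browder points and finite-rank poles. The paper merely packages this as the single equality $\sigma_{d}(T)=\sigma_{b}(T)$, whereas you verify the two inclusions of property $(S)$ directly, which is only a cosmetic difference.
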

\begin{proof}
It suffices to prove that $\sigma_{d}(T)=\sigma_{b}(T)$. Let $\lambda \notin \sigma_{d}(T)$. If $\lambda \in \sigma(T)$, then $\lambda \in \Pi(T)= \Pi_{00}(T)$. Therefore, by \cite[Corollary 3.21]{1} we get $\lambda \notin \sigma_{b}(T)$. If $\lambda \notin \sigma(T)$, then $\lambda \notin \sigma_{b}(T)$.
\end{proof}
The converse of the above theorem is not true in general. Let $T$ be defined as in Example \ref{example2}. Then $\sigma(T)=\sigma_b(T)=\{0\}$ and $\sigma_{d}(T)=\Pi_{00}(T)=\emptyset$. Therefore, $T$ satisfies property $(S)$ but does not satisfy property $(BR)$.
\begin{theorem}\label{theorem7}
Let $T \in B(X)$, then the following statements are equivalent:

(i) $T$ satisfies property $(Bw)$,

(ii) $T$ satisfies property $(BR)$ and generalized Browder's theorem,

(iii) $T$ satisfies Weyl's theorem and $\sigma_{d}(T)=\sigma_{b}(T)$.
\end{theorem}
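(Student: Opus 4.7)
The proof will mirror the pattern of Theorem \ref{theorem5}, just with the ``upper-semi'' decorations removed, and will lean on the standard decomposition $\sigma_{d}(T)=\sigma_{bw}(T)\cup \mathrm{acc}\,\sigma(T)$ together with the equivalence of generalized Browder's theorem and Browder's theorem recalled in the introduction.

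For the cycle (i) $\Rightarrow$ (ii) $\Rightarrow$ (iii) $\Rightarrow$ (i), I would start with (i) $\Rightarrow$ (ii). Assume property $(Bw)$, so $\sigma(T)\setminus\sigma_{bw}(T)=\Pi_{00}(T)$. To get generalized Browder's theorem, it suffices to show $\sigma_{d}(T)\subset\sigma_{bw}(T)$. If $\lambda\notin\sigma_{bw}(T)$, then either $\lambda\notin\sigma(T)$ (and we are done) or $\lambda\in\Pi_{00}(T)$, so $\lambda$ is isolated in $\sigma(T)$ and hence $\lambda\notin\mathrm{acc}\,\sigma(T)$; combined with $\lambda\notin\sigma_{bw}(T)$ the decomposition gives $\lambda\notin\sigma_{d}(T)$. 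Having $\sigma_{bw}(T)=\sigma_{d}(T)$, property $(Bw)$ rewrites as $\sigma(T)\setminus\sigma_{d}(T)=\Pi_{00}(T)$, that is, $\Pi(T)=\Pi_{00}(T)$, which is property $(BR)$.

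The implication (ii) $\Rightarrow$ (iii) is where Theorem \ref{theorem6} enters. From its proof, property $(BR)$ forces $\sigma_{d}(T)=\sigma_{b}(T)$. Generalized Browder's theorem is equivalent to Browder's theorem, hence $\sigma_{w}(T)=\sigma_{b}(T)=\sigma_{d}(T)$. Then for $\lambda\in\bigtriangleup(T)$ we have $\lambda\in\sigma(T)\setminus\sigma_{w}(T)=\sigma(T)\setminus\sigma_{d}(T)=\Pi(T)=\Pi_{00}(T)$ by property $(BR)$; for the reverse inclusion use $\Pi_{00}(T)=\Pi(T)\subset\sigma(T)\setminus\sigma_{d}(T)=\sigma(T)\setminus\sigma_{w}(T)=\bigtriangleup(T)$. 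Thus Weyl's theorem holds and the spectral identity $\sigma_{d}(T)=\sigma_{b}(T)$ is already in hand.

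Finally, (iii) $\Rightarrow$ (i): Weyl's theorem implies Browder's theorem, so $\sigma_{w}(T)=\sigma_{b}(T)$, and combining with the hypothesis $\sigma_{d}(T)=\sigma_{b}(T)$ yields $\sigma_{w}(T)=\sigma_{d}(T)$. Since $\sigma_{bw}(T)\subset\sigma_{d}(T)$ always holds, Browder's theorem (equivalently, generalized Browder's theorem) gives $\sigma_{bw}(T)=\sigma_{d}(T)=\sigma_{w}(T)$, and therefore $\sigma(T)\setminus\sigma_{bw}(T)=\sigma(T)\setminus\sigma_{w}(T)=\bigtriangleup(T)=\Pi_{00}(T)$ by Weyl's theorem, establishing property $(Bw)$. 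The only delicate point is (i) $\Rightarrow$ (ii), where one must not tacitly assume Browder's theorem; the isolated-point argument via $\Pi_{00}(T)$ is the mechanism that extracts it from property $(Bw)$ alone, and every other step is a rewriting using the already-recorded equivalence between the ``generalized'' and classical forms of Browder's theorem.
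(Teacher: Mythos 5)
Your proof is correct and follows essentially the same route as the paper: the decomposition $\sigma_{d}(T)=\sigma_{bw}(T)\cup\mathrm{acc}\,\sigma(T)$ to extract generalized Browder's theorem from property $(Bw)$, the identity $\sigma_{d}(T)=\sigma_{b}(T)$ coming from property $(BR)$ (Theorem \ref{theorem6}), and the equivalences Weyl $\Rightarrow$ Browder $\Leftrightarrow$ generalized Browder. The only difference is organizational (a cycle (i)$\Rightarrow$(ii)$\Rightarrow$(iii)$\Rightarrow$(i) instead of the paper's two biconditionals), which changes nothing of substance.
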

\begin{proof}
(i) $\Leftrightarrow$ (ii)  Suppose that $T$ satisfies property ($Bw$). Let $\lambda \in \sigma(T) \setminus \sigma_{bw}(T)= \Pi_{00}(T)$. This implies that $\lambda \in$ iso $\sigma(T)$. It is known that $\sigma_{d}(T)=\sigma_{bw}(T)$ $\cup$ acc $\sigma(T)$ which gives $\lambda \notin \sigma_{d}(T).$ Therefore, $T$ satisfies generalized Browder's theorem which gives $T$ satisfies property $(BR)$. Conversely, let $T$ satisfies property $(BR)$. Since $T$ satisfies generalized Browder's theorem. Therefore, $T$ satisfies property $(Bw)$.\\
(ii) $\Leftrightarrow$ (iii) Suppose that $T$ satisfies property $(BR)$ and generalized Browder's theorem. By Theorem \ref{theorem6} $T$ satisfies property $(S)$. This gives $\sigma_{d}(T)= \sigma_{b}(T)$. Let $\lambda \in \bigtriangleup(T)$, then $\lambda \in \sigma(T)$ and $\lambda \notin \sigma_{w}(T)$. Since $T$ satisfies generalized Browder's theorem and we know that generalized  Browder's theorem is equivalent to Browder's theorem, $\lambda \notin \sigma_{b}(T)$. This gives $\lambda \in \Pi(T)= \Pi_{00}(T)$. Now, let $\lambda \in \Pi_{00}(T)=\Pi(T)$ which implies that $\lambda \notin \sigma_{d}(T)$. As $\sigma_{d}(T)=\sigma_{b}(T),$ $\lambda \notin \sigma_{b}(T)$. This implies that $\lambda \notin \sigma_{w}(T)$. Therefore, $\lambda \in \bigtriangleup(T)$. Conversely, let $T$ satisfies Weyl's theorem. We know that Weyl's theorem implies Browder's theorem which gives $\sigma_{w}(T)=\sigma_{b}(T)=\sigma_{d}(T)$. Therefore, $\Pi(T)=\Pi_{00}(T)$.
\end{proof}
\begin{theorem}\label{theorem8}
Let $T \in B(X)$. If $T$ is $a$-polaroid, then property $(BgR)$ and property $(BR)$ are equivalent.
\end{theorem}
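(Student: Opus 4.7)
The plan is to prove the equivalence by noting that one direction is already contained in Theorem \ref{theorem1} (which requires no hypothesis on $T$), and only the reverse implication $(BR)\Rightarrow (BgR)$ genuinely uses the $a$-polaroid assumption. So I would fix a $T$ that is $a$-polaroid and satisfies $(BR)$, i.e.\ $\Pi(T)=\Pi_{00}(T)$, and show $\Pi^a(T)=\Pi_{00}(T)$ by two inclusions.

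First I would handle $\Pi_{00}(T)\subset \Pi^a(T)$: this is immediate from property $(BR)$ together with the general inclusion $\Pi(T)\subset \Pi^a(T)$ noted in the preliminaries, since $\Pi_{00}(T)=\Pi(T)\subset\Pi^a(T)$. No polaroid hypothesis is needed for this direction.

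The nontrivial inclusion is $\Pi^a(T)\subset \Pi_{00}(T)$, and this is where the $a$-polaroid assumption enters. Take $\lambda\in\Pi^a(T)$, so $\lambda\in\sigma_a(T)$ and $\lambda I-T$ is left Drazin invertible. A standard fact (a left pole of $T$ is automatically an isolated point of $\sigma_a(T)$) gives $\lambda\in\operatorname{iso}\sigma_a(T)$. Since $T$ is $a$-polaroid, $\lambda$ is a pole of the resolvent of $T$, hence $\lambda\in\Pi(T)$. By hypothesis $(BR)$ we then conclude $\lambda\in\Pi_{00}(T)$, completing the proof.

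I do not anticipate a serious obstacle here: the whole argument reduces to chasing the definitions and using that, for $a$-polaroid operators, membership in $\Pi^a(T)$ upgrades to membership in $\Pi(T)$. The only point one should be careful about is justifying $\Pi^a(T)\subset\operatorname{iso}\sigma_a(T)$, which is a known consequence of left Drazin invertibility; aside from that, the result is essentially a one-line corollary of Theorem \ref{theorem1} and the definition of $a$-polaroid.
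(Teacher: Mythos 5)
Your proposal is correct and follows essentially the same route as the paper: the key step in both is that for $\lambda\in\Pi^a(T)$ one has $\lambda\in\operatorname{iso}\sigma_a(T)$, so the $a$-polaroid hypothesis upgrades $\lambda$ to a pole, giving $\Pi^a(T)\subset\Pi(T)$ and hence $\Pi^a(T)=\Pi(T)$, from which the equivalence of $(BR)$ and $(BgR)$ is immediate. The paper phrases this as a single inclusion ($\Pi^a(T)\subset\Pi(T)$) rather than splitting into the two inclusions with $\Pi_{00}(T)$, but the content is identical.
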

\begin{proof}
 It suffices to prove that $\Pi^{a}(T)\subset\Pi(T)$. Let $\lambda \in \Pi^{a}(T)$ which implies that $\lambda \in$ iso$\sigma_{a}(T)$. Since $T$ is $a$-polaroid, $\lambda \in \Pi(T)$. 
\end{proof}
\begin{theorem}\label{theorem9}
Let $T \in B(X)$. If $T$ is polaroid, then $T$ satisfies property $(Z_{\Pi a})$ if and only if $T$ satisfies property $(BgR)$ and Browder's theorem.
\end{theorem}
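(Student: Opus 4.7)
The plan is to exploit the fact that, when $T$ is polaroid, the sets $\Pi_0(T)$ and $\Pi_{00}(T)$ coincide; once this is in place, both directions reduce to chasing the relevant equalities. I would first record this key observation: $\Pi_0(T) \subset \Pi_{00}(T)$ is automatic, since a Browder point of the spectrum is an isolated pole of finite multiplicity. For the reverse inclusion, given $\lambda \in \Pi_{00}(T)$ one has $\lambda \in$ iso $\sigma(T)$ with $0 < \alpha(\lambda I - T) < \infty$; the polaroid hypothesis makes $\lambda$ a pole, so $p(\lambda I - T) = q(\lambda I - T) < \infty$, and combined with finite nullity this forces $\lambda I - T$ to be Browder, whence $\lambda \in \Pi_0(T)$.

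For the forward direction, I assume $\bigtriangleup(T) = \Pi^a(T)$ and first establish Browder's theorem. Since $\Pi_0(T) \subset \bigtriangleup(T)$ always holds, it suffices to show $\bigtriangleup(T) \subset \Pi_0(T)$: for $\lambda \in \bigtriangleup(T) = \Pi^a(T)$, the operator $\lambda I - T$ is Weyl (so $\alpha = \beta < \infty$) and simultaneously a left pole (so $p(\lambda I - T) < \infty$); the classical fact that Weyl together with finite ascent forces Browder (available in \cite{1}) then gives $\lambda \in \Pi_0(T)$. Once Browder's theorem is in hand, property $(BgR)$ follows from the chain $\Pi^a(T) = \bigtriangleup(T) = \Pi_0(T) = \Pi_{00}(T)$, where the last equality is the polaroid observation.

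For the backward direction, given Browder's theorem and property $(BgR)$, I would simply chain the assumed equalities together with the polaroid observation: $\bigtriangleup(T) = \Pi_0(T) = \Pi_{00}(T) = \Pi^a(T)$, which is precisely property $(Z_{\Pi a})$.

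The only subtle point is the Fredholm-theoretic step that Weyl plus finite ascent forces Browder; this is a standard fact that I would cite directly rather than reprove. The remainder of the argument is pure unpacking of definitions combined with the single polaroid observation, so no serious obstacle is expected.
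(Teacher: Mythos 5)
Your proof is correct and follows essentially the same route as the paper: the forward direction uses exactly the same key fact (an operator with $\alpha=\beta<\infty$ and finite ascent is Browder, i.e.\ Aiena's Theorem 3.4) to obtain Browder's theorem, and then the polaroid hypothesis to identify $\Pi_{00}(T)$ with poles. The only difference is cosmetic: for the converse you chain $\bigtriangleup(T)=\Pi_{0}(T)=\Pi_{00}(T)=\Pi^{a}(T)$ directly via your observation that $\Pi_{0}(T)=\Pi_{00}(T)$ for polaroid $T$, whereas the paper reaches the same equalities by invoking Theorems \ref{theorem1} and \ref{theorem7} to get Weyl's theorem.
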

\begin{proof}
Suppose that $T$ satisfies  property $(Z_{\Pi a})$, then $\Pi^{a}(T)=\bigtriangleup(T)$. Let $\lambda \in \bigtriangleup(T)=\Pi^{a}(T)$. This gives $\alpha(\lambda I-T)=\beta(\lambda I-T) < \infty$ and $p(\lambda I-T) < \infty$. Therefore, by \cite[Theorem 3.4]{1} $q(\lambda I-T) < \infty$. Thus, $\lambda \in  \Pi_{0}(T)$. Hence, $T$ satisfies Browder's Theorem. Let $\lambda \in \Pi^{a}(T)$. As  $\Pi^{a}(T)=\bigtriangleup(T)= \Pi_{0}(T)$, $\lambda \in \Pi_{00}(T)$. Now, let  $\lambda \in \Pi_{00}(T)$. Since $T$ is polaroid, $\lambda \in \Pi(T) \subset \Pi^{a}(T)$.

Conversely, suppose that $T$ satisfies property $(BgR)$ and Browder's theorem. By Theorem \ref{theorem1} $T$ satisfies property $(BR)$. By Theorem \ref{theorem7} $T$ satisfies Weyl's theorem, then $\bigtriangleup(T)=\Pi_{00}(T)=\Pi^{a}(T)$. Hence, $T$ satisfies  property $(Z_{\Pi a})$.
\end{proof}
\section{Property $(BR)$ and property $(BgR)$ under perturbations}
In this section we study the stability of properties $(BR)$ and $(BgR)$ under perturbations. Recall that an operator $K \in B(X)$ is said to be a Reisz operator if $\lambda I-K \in \phi (X)$ for all $\lambda \neq 0$. All compact operators and quasi-nilpotent operators are examples of Reisz operators. We have  
$$ \sigma_* (T)= \sigma_* (T+K)$$
for every Reisz operator $K$ commuting with $T \in B(X)$ where $\sigma_*= \sigma_{uw}$ or $\sigma_{ub}$ or $\sigma_b$ or $\sigma_w$. Recall that for every quasi-nilpotent operator commuting with $T$, we have $\sigma(T+Q)= \sigma(T)$ and $\sigma_{a}(T+Q)= \sigma_{a}(T).$
\begin{theorem}\label{theorem10}
Let $T \in B(X)$ and $N$ be a nilpotent operator on $X$ commuting with $T$, then

(i)   $T$ satisfies property $(BR)$ if and only if $T+N$ satisfies property $(BR)$.

(ii) $T$ satisfies property $(BgR)$ if and only if $T+N$ satisfies property $(BgR)$.
\end{theorem}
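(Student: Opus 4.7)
The plan is to reduce both parts of the theorem to three invariance identities: $\Pi(T+N)=\Pi(T)$, $\Pi^{a}(T+N)=\Pi^{a}(T)$, and $\Pi_{00}(T+N)=\Pi_{00}(T)$. Each of these immediately transports the defining equality of property $(BR)$ or property $(BgR)$ from $T$ to $T+N$ and back, so (i) and (ii) follow at once.

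For the first two invariances, the preamble to this section already supplies $\sigma(T+N)=\sigma(T)$ and $\sigma_{a}(T+N)=\sigma_{a}(T)$, since $N$ is in particular quasi-nilpotent. For $\sigma_{d}(T+N)=\sigma_{d}(T)$ and $\sigma_{ld}(T+N)=\sigma_{ld}(T)$, I would invoke the standard preservation of Drazin and left Drazin invertibility under commuting nilpotent perturbations. A self-contained route goes through the Drazin decomposition $\lambda I-T=A\oplus B$ with $A$ invertible and $B$ nilpotent, which is $N$-invariant because $N$ commutes with $T$; this yields $\lambda I-(T+N)=(A-N_{1})\oplus(B-N_{2})$, where $A-N_{1}$ remains invertible (a commuting nilpotent perturbation of an invertible operator is invertible) and $B-N_{2}$ remains nilpotent (a commuting sum of nilpotents is nilpotent, via the binomial theorem). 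The left Drazin case is analogous. Together with the identity of isolated spectra, these give $\Pi(T+N)=\Pi(T)$ and $\Pi^{a}(T+N)=\Pi^{a}(T)$.

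The substantive step is $\Pi_{00}(T+N)=\Pi_{00}(T)$. Since iso $\sigma(T)=$ iso $\sigma(T+N)$, fix $\lambda$ in this common set, set $S=\lambda I-T$, and note that $\lambda I-(T+N)=S-N$ with $SN=NS$ and $N^{k}=0$ for some $k\ge1$. To see finiteness of $\alpha$ is preserved: if $(S-N)x=0$, then $Sx=Nx$, and commutativity iterates this to $S^{j}x=N^{j}x$ for every $j$, so $S^{k}x=0$; hence $N(S-N)\subseteq N(S^{k})$. The standard injection $S\colon N(S^{j+1})/N(S^{j})\hookrightarrow N(S^{j})/N(S^{j-1})$ gives $\dim N(S^{k})\le k\,\alpha(S)$, whence $\alpha(\lambda I-(T+N))\le k\,\alpha(\lambda I-T)$; swapping the roles of $T$ and $T+N$ (and replacing $N$ by $-N$) yields the reverse inequality. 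To see positivity is preserved: when $\alpha(S)>0$, the subspace $N(S)$ is nonzero and $N$-invariant (since $Sx=0$ forces $SNx=NSx=0$), and the restriction of $N$ to $N(S)$ is nilpotent on a nonzero Banach space, hence not injective; any nonzero $y\in N(S)$ with $Ny=0$ satisfies $(S-N)y=0$, so $\alpha(\lambda I-(T+N))>0$. Symmetry supplies the converse.

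With all three invariances established, $T$ satisfies $(BR)$ if and only if $T+N$ does, and the same for $(BgR)$. The main obstacle I anticipate is the invariance of $\sigma_{d}$ and $\sigma_{ld}$, since the section preamble records invariance only of $\sigma_{w},\sigma_{uw},\sigma_{b},\sigma_{ub}$ (for Riesz perturbations) and of $\sigma,\sigma_{a}$ (for quasi-nilpotent perturbations); once a clean citation or the short decomposition argument above is inserted, the remaining $\alpha$-comparison is entirely elementary.
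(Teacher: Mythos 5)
Your proposal takes essentially the same route as the paper: the paper proves both parts precisely by quoting the three invariances you isolate, namely $\Pi(T+N)=\Pi(T)$ (via \cite[Lemma 2.2]{12}), $\Pi_{00}(T+N)=\Pi_{00}(T)$ (via \cite[Theorem 3.5]{13}) and $\Pi^{a}(T+N)=\Pi^{a}(T)$ (via \cite[Corollary 3.8]{19}), and then reads off properties $(BR)$ and $(BgR)$. Your elementary verifications of two of these are correct and are a nice supplement: the ascent--descent splitting $X=N((\lambda I-T)^{p})\oplus(\lambda I-T)^{p}(X)$ is indeed invariant under the commuting $N$, and your $\alpha$-comparison (the bound $\alpha(\lambda I-(T+N))\le k\,\alpha(\lambda I-T)$ together with the non-injectivity of the nilpotent restriction $N|_{N(\lambda I-T)}$) correctly gives $\Pi_{00}(T+N)=\Pi_{00}(T)$ once $\sigma(T+N)=\sigma(T)$ is used. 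The one weak point is the remark that ``the left Drazin case is analogous'': a left Drazin invertible operator has finite ascent but in general infinite descent, so there is no canonical complemented decomposition to split along, and the representation of a semi B-Fredholm (or left Drazin invertible) operator as a direct sum of a bounded below operator and a nilpotent one is a Hilbert-space result that is not available in an arbitrary Banach space. Hence for $\sigma_{ld}$, equivalently for $\Pi^{a}$, you must rely on the cited preservation theorem rather than the sketched decomposition --- which is exactly what the paper does --- so the overall argument stands, but the self-contained alternative as written does not cover part (ii).
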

\begin{proof}
(i)  By \cite[Lemma 2.2]{12} we have $\Pi(T+N)= \Pi(T)$ and by \cite[Theorem 3.5]{13} we know that $\Pi_{00}(T+N)=\Pi_{00}(T)$.  Hence, $T$ satisfies property $(BR)$ if and only if $T+N$ satisfies property $(BR)$.
\\
(ii)   By \cite[Corollay 3.8]{19} we have  $\Pi^{a}(T+N)= \Pi^{a}(T)$. Also, $\Pi_{00}(T+N)=\Pi_{00}(T)$. Hence, $T$ satisfies property $(BgR)$ if and only if $T+N$ satisfies property $(BgR)$.
\end{proof}
Let $Q$ be quasi-nilpotent operator  commuting with $T$ and $T$ satisfies property $(BgR)$ (property $(BR)$) then it is not necessary that $T+Q$ satisfies property $(BgR)$ (property $(BR)$).
\begin{example}
\emph{Let $T: l^2(\mathbb{N}) \rightarrow l^2(\mathbb{N})$ be defined as $T(x_1,x_2, \ldots)=\Big(0,\frac{x_1}{2},\frac{x_1}{3},\\ \frac{x_1}{4},\ldots\Big)$. Then $\sigma(T)=\sigma_{d}(T)=\{0\}$ and $ \sigma_{a}(T)=\sigma_{ld}(T)=\Pi_{00}(T)= \emptyset$. Therefore, $T$ is a quasi-nilpotent operator satisfying property $(BgR)$ and property $(BR)$. If we take $Q=-T$, then $T+Q$ is zero operator and we know that $\Pi(0)=\Pi^{a}(0)=\{0\}$ and $\Pi_{00}(0)=\emptyset.$ Hence, $T+Q$ does not satisfy property $(BgR)$ and property $(BR)$.}
\end{example}

\begin{lemma}\label{lemma2}
Let $T \in B(X)$ and $Q$ be a quasi-nilpotent operator commuting with $T$. If $T$ is $a$-polaroid satisfying property $(BgR)$, then $T+Q$ is $a$-polaroid operator.
\end{lemma}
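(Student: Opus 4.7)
The goal is to show that every isolated point of $\sigma_a(T+Q)$ is a pole of the resolvent of $T+Q$. My plan is to transfer the isolated point back to $T$ using the spectral invariance under commuting quasi-nilpotent perturbations, exploit the $a$-polaroid hypothesis on $T$ together with property $(BgR)$ to get a \emph{Browder} point of $T$, and then push this forward to $T+Q$ by the Browder-spectral invariance.

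First, since $Q$ is quasi-nilpotent and commutes with $T$, the paper has already recalled that $\sigma(T+Q)=\sigma(T)$ and $\sigma_{a}(T+Q)=\sigma_{a}(T)$, and that $\sigma_{b}(T+Q)=\sigma_{b}(T)$. Let $\lambda\in\mathrm{iso}\,\sigma_{a}(T+Q)$; then $\lambda\in\mathrm{iso}\,\sigma_{a}(T)$. Since $T$ is $a$-polaroid, $\lambda\in\Pi(T)$, and in particular $\lambda\in\Pi^{a}(T)$. Because $T$ satisfies property $(BgR)$, $\Pi^{a}(T)=\Pi_{00}(T)$, so $\lambda\in\Pi_{00}(T)$, which gives $0<\alpha(\lambda I-T)<\infty$.

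Combining the two pieces of information about $\lambda$: it is a pole of the resolvent of $T$ (so $p(\lambda I-T)=q(\lambda I-T)<\infty$) and $\alpha(\lambda I-T)<\infty$. Together with finite descent this forces $\beta(\lambda I-T)<\infty$ (finite ascent plus finite nullity plus closed range of an iterated power, which holds at a pole), so $\lambda I-T$ is a Browder operator, i.e.\ $\lambda\notin\sigma_{b}(T)$. By the Browder-spectrum invariance under the commuting quasi-nilpotent perturbation $Q$, we obtain $\lambda\notin\sigma_{b}(T+Q)$. Since $\lambda\in\sigma_{a}(T+Q)\subseteq\sigma(T+Q)$, this places $\lambda$ in $\Pi_{0}(T+Q)\subseteq\Pi(T+Q)$, so $\lambda$ is a pole of the resolvent of $T+Q$. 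As $\lambda\in\mathrm{iso}\,\sigma_{a}(T+Q)$ was arbitrary, $T+Q$ is $a$-polaroid.

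The main thing to be careful about is the transition from ``$\lambda$ is a pole of $T$ with $0<\alpha(\lambda I-T)<\infty$'' to ``$\lambda I-T$ is Browder''; this is a standard consequence (e.g.\ via \cite[Corollary 3.21]{1}, already invoked repeatedly in the paper), but it is what makes the Browder-invariance applicable. Everything else is a bookkeeping chase through the identifications $\sigma(T)=\sigma(T+Q)$, $\sigma_{a}(T)=\sigma_{a}(T+Q)$, and $\sigma_{b}(T)=\sigma_{b}(T+Q)$.
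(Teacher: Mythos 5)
Your proof is correct and follows essentially the same route as the paper: transfer the isolated point via $\sigma_a(T+Q)=\sigma_a(T)$, use the $a$-polaroid hypothesis to land in $\Pi(T)$, conclude $\lambda\in\Pi_{00}(T)$ hence $\lambda\notin\sigma_b(T)$, and finish with the invariance $\sigma_b(T+Q)=\sigma_b(T)$. The only cosmetic difference is that you apply property $(BgR)$ directly through the inclusion $\Pi(T)\subset\Pi^{a}(T)$, whereas the paper first invokes Theorem \ref{theorem8} to pass to property $(BR)$; both yield $\lambda\in\Pi_{00}(T)$ in one step.
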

\begin{proof}
Let $\lambda \in$ iso $\sigma_{a}(T+Q)$, then $\lambda \in$ iso $\sigma_{a}(T).$ As $T$ is  $a$-polaroid operator, $\lambda \in \Pi(T)$. By Theorem \ref{theorem8} $T$ satisfies property $(BR)$ which implies that $\lambda \in \Pi_{00}(T)$. This gives $\lambda \notin \sigma_{b}(T)$.  Therefore, $\lambda \notin \sigma_{b}(T+Q)$ which implies that $\lambda \notin \sigma_{d}(T+Q)$. Hence, $\lambda \in \Pi (T+Q)$.
\end{proof}
\begin{theorem}\label{theorem11}
Let $T \in B(X)$ and $Q$ be a quasi-nilpotent operator commuting with $T$. If $T$ is $a$-polaroid satisfying $a$-Browder's theorem, then $T$ satisfies property $(BgR)$ if and only if $T+Q$ satisfies property $(BgR)$.
\end{theorem}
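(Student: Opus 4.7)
The plan is to exploit the symmetry $T=(T+Q)+(-Q)$, with $-Q$ a commuting quasi-nilpotent perturbation of $T+Q$, so that both implications reduce to essentially the same argument. I rely on the facts stated at the start of Section 3: commuting Riesz perturbations preserve $\sigma_{uw}$, $\sigma_{ub}$, $\sigma_b$, $\sigma_w$; commuting quasi-nilpotent perturbations additionally preserve $\sigma$ and $\sigma_a$; and $a$-Browder's theorem is equivalent to generalized $a$-Browder's theorem, giving $\sigma_{ld}=\sigma_{usbw}$. An immediate consequence is that $T+Q$ also satisfies $a$-Browder's theorem.

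For the forward direction, I would apply Lemma \ref{lemma2} directly to conclude that $T+Q$ is $a$-polaroid, and then verify $\Pi^a(T+Q)=\Pi_{00}(T+Q)$ by hand. For $\lambda\in\Pi_{00}(T+Q)$, isolation in $\sigma(T+Q)$ together with $\lambda\in\sigma_a(T+Q)$ places $\lambda$ in $\text{iso}\,\sigma_a(T+Q)$, and $a$-polaroidness of $T+Q$ yields $\lambda\in\Pi(T+Q)\subseteq\Pi^a(T+Q)$. Conversely, for $\lambda\in\Pi^a(T+Q)$, $a$-polaroidness of $T+Q$ gives $\lambda\in\Pi(T+Q)$, hence $\lambda\in\text{iso}\,\sigma_a(T+Q)=\text{iso}\,\sigma_a(T)$; $a$-polaroidness of $T$ then places $\lambda\in\Pi(T)$, and property $(BgR)$ for $T$ yields $\lambda\in\Pi(T)\subseteq\Pi^a(T)=\Pi_{00}(T)$, so $\alpha(\lambda I-T)<\infty$. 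Combined with $\lambda$ being a pole of $T$, $\lambda\notin\sigma_b(T)=\sigma_b(T+Q)$, giving $\alpha(\lambda I-(T+Q))<\infty$ and placing $\lambda$ in $\Pi_{00}(T+Q)$.

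For the converse, the identical argument with $T$ and $T+Q$ swapped succeeds once I establish that $T+Q$ is $a$-polaroid. For this, let $\mu\in\text{iso}\,\sigma_a(T+Q)=\text{iso}\,\sigma_a(T)$; $a$-polaroidness of $T$ gives $\mu\in\Pi(T)\subseteq\Pi^a(T)$, so $\mu\notin\sigma_{ld}(T)=\sigma_{usbw}(T)$ by $a$-Browder's theorem for $T$. Invoking the Berkani-type stability of the upper semi-B-Weyl spectrum under commuting quasi-nilpotent perturbations, $\mu\notin\sigma_{usbw}(T+Q)=\sigma_{ld}(T+Q)$, so $\mu\in\Pi^a(T+Q)$; property $(BgR)$ for $T+Q$ then places $\mu$ in $\Pi_{00}(T+Q)$, and combining \cite[Lemma 2.4]{11} with $\mu\in\text{iso}\,\sigma(T+Q)$ and \cite[Corollary 3.21]{1} promotes $\mu$ to a pole of $T+Q$. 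The main obstacle is exactly this preservation of $\sigma_{usbw}$ under commuting quasi-nilpotent perturbations, which is not among the stabilities stated at the top of Section 3 and must be cited from the perturbation literature.
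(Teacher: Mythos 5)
Your forward direction is sound and runs essentially parallel to the paper's proof: Lemma \ref{lemma2} gives that $T+Q$ is $a$-polaroid, and the only divergence is at the end, where the paper passes through Theorem \ref{theorem5} (property $(w)$, then $\sigma_{uw}(T)=\sigma_{uw}(T+Q)$) while you use $\lambda\in\Pi(T)\cap\Pi_{00}(T)\Rightarrow\lambda\notin\sigma_b(T)=\sigma_b(T+Q)$; both are legitimate, since $\sigma_b$ and $\sigma_{uw}$ are stable under commuting Riesz perturbations.

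The converse direction, however, contains a genuine gap, and you have correctly located it yourself: the claimed stability $\sigma_{usbw}(T+Q)=\sigma_{usbw}(T)$ for a commuting \emph{quasi-nilpotent} $Q$ is false, so it cannot be ``cited from the perturbation literature.'' The spectra coming from B-Fredholm theory ($\sigma_{usbw}$, $\sigma_{ld}$, $\sigma_{bw}$, $\sigma_d$) are preserved by commuting nilpotent (or power finite-rank) perturbations, not by quasi-nilpotent ones; the paper's own example following Theorem \ref{theorem10} (a quasi-nilpotent $T$ with $Q=-T$) is designed precisely to show that $\Pi^a$ and $\sigma_{ld}$ are not stable under such perturbations. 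Concretely, take $T=0$ and $Q=V$ the Volterra operator on $L^2[0,1]$ (or any injective, non-nilpotent quasi-nilpotent operator with non-closed range): then $\sigma_{usbw}(T)=\emptyset$ while $\sigma_{usbw}(T+Q)=\{0\}$, since no power of $V$ has closed range. Worse, this same pair shows that the implication you are trying to prove fails outright under the stated hypotheses: $T=0$ is $a$-polaroid and satisfies $a$-Browder's theorem, $T+Q=V$ satisfies property $(BgR)$ vacuously ($\Pi^{a}(V)=\Pi_{00}(V)=\emptyset$, because $p(V)=0$, $V(X)$ is not closed and $\alpha(V)=0$), yet $T=0$ does not, since $\Pi^{a}(0)=\{0\}$ while $\Pi_{00}(0)=\emptyset$. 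So no repair of your argument can close this direction; note also that the paper's printed proof establishes only the implication from $T$ to $T+Q$ and never addresses the converse, so your difficulty reflects a defect in the theorem as stated rather than a missing lemma on your side.
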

\begin{proof}
By Lemma \ref{lemma2} $T+Q$ is $a$-Polaroid, then $\Pi_{00}(T+Q) \subset \Pi^{a}(T+Q).$ Let $\lambda \in \Pi^{a}(T+Q)$ which implies that $\lambda \in$ iso $\sigma_{a}(T+Q)$=iso $\sigma_{a}(T)$. Since $T$ is $a$-Polaroid operator, $\lambda \in \Pi(T) \subset$ iso $\sigma(T)=$ iso $\sigma(T+Q)$. As $T$ satisfies property $(BgR)$,  $\lambda \in \Pi_{00}(T)$. By Theorem \ref{theorem5} $T$ satisfies property $(w)$, then $\Pi_{00}(T)= \bigtriangleup_{a}(T)$. Therefore, $\lambda \notin \sigma_{uw}(T)=\sigma_{uw}(T+Q)$. Thus, $0 < \alpha(\lambda I-(T+Q)) < \infty$. Hence, $\lambda \in \Pi_{00}(T+Q)$. 
\end{proof}
\begin{theorem}\label{{theorem12}}
Let $T \in B(X)$ and $Q$ be a quasi-nilpotent operator commuting with $T$. If $T$ is $a$-polaroid and finite-isoloid, then $T+Q$ satisfies property $(BgR)$.
\end{theorem}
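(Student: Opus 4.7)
The plan is to reduce to Theorem \ref{theorem3}(ii) applied to $T+Q$; that is, I will show that $T+Q$ is both $a$-polaroid and finite-isoloid, and then property $(BgR)$ for $T+Q$ follows immediately.

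First, by Theorem \ref{theorem3}(ii) the hypotheses on $T$ (namely $a$-polaroid plus finite-isoloid) already give that $T$ satisfies property $(BgR)$. This puts me in the situation of Lemma \ref{lemma2}, so I conclude at once that $T+Q$ is $a$-polaroid. This takes care of one of the two properties needed to reapply Theorem \ref{theorem3}(ii).

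The real work is to verify that $T+Q$ is finite-isoloid. Pick $\lambda \in \mathrm{iso}\,\sigma(T+Q)$. Since $Q$ is quasi-nilpotent and commutes with $T$, the spectrum is preserved, so $\mathrm{iso}\,\sigma(T+Q)=\mathrm{iso}\,\sigma(T)$ and hence $\lambda \in \mathrm{iso}\,\sigma(T)$. Because $T$ is $a$-polaroid (hence polaroid) and finite-isoloid, $\lambda$ is a pole of the resolvent of $T$ with $0<\alpha(\lambda I-T)<\infty$. A pole of finite rank is a Browder point (finite ascent and descent together with finite nullity force $\alpha=\beta<\infty$), so $\lambda \notin \sigma_b(T)$. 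Since $Q$ is a Riesz operator commuting with $T$, the Browder spectrum is invariant under the perturbation, i.e.\ $\sigma_b(T+Q)=\sigma_b(T)$, and therefore $\lambda \notin \sigma_b(T+Q)$. This forces $\alpha(\lambda I-(T+Q))<\infty$; and since $\lambda \in \sigma(T+Q)$, a Browder operator that is not invertible must have strictly positive nullity, so $0<\alpha(\lambda I-(T+Q))<\infty$. Thus $T+Q$ is finite-isoloid.

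Having shown $T+Q$ is $a$-polaroid and finite-isoloid, Theorem \ref{theorem3}(ii) applied to $T+Q$ yields that $T+Q$ satisfies property $(BgR)$, completing the proof. The only nonroutine step is the finite-isoloid verification, and the key leverage is the fact that for a polaroid, finite-isoloid operator isolated spectral points are automatically Browder points, together with the stability of $\sigma_b$ under commuting Riesz perturbations.
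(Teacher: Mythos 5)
Your proof is correct and follows essentially the same route as the paper: both arguments start from Theorem \ref{theorem3} and Lemma \ref{lemma2} and then exploit the invariance of the spectrum and of $\sigma_b$ under the commuting quasi-nilpotent (Riesz) perturbation. The only difference is one of packaging: the paper verifies $\Pi^{a}(T+Q)=\Pi_{00}(T+Q)$ directly via iso\,$\sigma_{a}(T+Q)=$ iso\,$\sigma_{a}(T)$, whereas you verify that $T+Q$ is finite-isoloid (in effect a quasi-nilpotent analogue of Lemma \ref{lemma4}) and then reapply Theorem \ref{theorem3}(ii), both steps resting on the same facts that poles of finite nullity are Browder points and that $\sigma_{b}(T+Q)=\sigma_{b}(T)$.
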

\begin{proof}
By Theorem \ref{theorem3} $T$ satisfies property $(BgR)$. Therefore, by Lemma \ref{lemma2} $T+Q$ is $a$-polaroid which implies that  $\Pi_{00}(T+Q) \subset \Pi^{a}(T+Q).$ Let $\lambda \in \Pi^{a}(T+Q)$ then $\lambda \in$ iso $\sigma_{a}(T+Q)=$ iso $\sigma_{a}(T).$  As $T$ is  $a$-polaroid, $\lambda \in \Pi^{a}(T)=\Pi_{00}(T)$. Therefore,  $\lambda \notin \sigma_{b}(T)=\sigma_{b}(T+Q)$. This gives  $\lambda \in \Pi_{00}(T+Q)$.
\end{proof}
\begin{theorem}\label{theorem13}
Let $T \in B(X)$ and $F$ be a finite-dimensional operator commuting with $T$. If $T$ is polaroid  satisfying property $(BR)$, then $T+F$ satisfies property $(BR)$. 
\end{theorem}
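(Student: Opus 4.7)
The plan is to reduce to Theorem \ref{theorem3}(i): once I establish that $T+F$ is both polaroid and finite-isoloid, property $(BR)$ for $T+F$ follows at once.

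First I would verify polaroidness of $T+F$. Any finite-dimensional operator is algebraic, so by the standard stability of the polaroid property under commuting algebraic perturbations, $T+F$ is polaroid.

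Next I would show $T+F$ is finite-isoloid. Since $T$ is polaroid and satisfies $(BR)$, Theorem \ref{theorem7} gives $\sigma_{d}(T)=\sigma_{b}(T)$; combined with the polaroid hypothesis, this already makes $T$ itself finite-isoloid. The well-known invariance $\sigma_{b}(T+F)=\sigma_{b}(T)$ under commuting Riesz perturbations then yields $\sigma_{b}(T+F)=\sigma_{d}(T)=$ acc $\sigma(T)$. For any $\lambda \in$ iso $\sigma(T+F)$, polaroidness of $T+F$ gives $\alpha(\lambda I-(T+F))>0$ together with $p(\lambda I-(T+F))=q(\lambda I-(T+F))<\infty$; hence it suffices to show $\lambda \notin \sigma_{b}(T+F)$, since a Drazin invertible operator with finite nullity is automatically Browder. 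Theorem \ref{theorem3}(i) applied to $T+F$ would then deliver property $(BR)$.

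The main technical obstacle is precisely the last exclusion: ruling out that an isolated point $\lambda$ of $\sigma(T+F)$ lies in acc $\sigma(T)=\sigma_{b}(T+F)$. I plan to extract a sequence $\lambda_{n}\to\lambda$ in $\sigma(T)\setminus\{\lambda\}$ and analyse two cases. If infinitely many $\lambda_{n}$ lie in $\sigma_{b}(T)=\sigma_{b}(T+F)\subset\sigma(T+F)$, isolation of $\lambda$ in $\sigma(T+F)$ is immediately contradicted. Otherwise cofinitely many $\lambda_{n}$ lie in $\Pi_{0}(T)$, and I would invoke a finer Fredholm-perturbation lemma --- using the commuting finite rank of $F$, the invariance $\sigma_{w}(T+F)=\sigma_{w}(T)$, and polaroidness --- to conclude that sufficiently many of these $\lambda_{n}$ survive as spectral points of $T+F$, again contradicting isolation. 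This comparison of $\sigma(T)$ with $\sigma(T+F)$ at isolated Browder points is the delicate step of the proof.
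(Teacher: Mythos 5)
Your reduction to Theorem \ref{theorem3}(i) is a reasonable outline, and the preparatory steps do check out: polaroid plus property $(BR)$ gives iso\,$\sigma(T)=\Pi(T)=\Pi_{00}(T)$, hence $\sigma_b(T)=\sigma_d(T)=\mbox{acc}\,\sigma(T)$ (this is really the content of Theorem \ref{theorem6} and its proof; Theorem \ref{theorem7} is the wrong citation, since it presupposes generalized Browder's theorem), and $T+F$ is polaroid because $F$ is algebraic. But the decisive step --- that an isolated point of $\sigma(T+F)$ cannot lie in $\mbox{acc}\,\sigma(T)=\sigma_b(T+F)$ --- is exactly where the proposal stops being a proof. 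Your Case 2 is only a promise: you say you would ``invoke a finer Fredholm-perturbation lemma'' to show that infinitely many Browder points $\lambda_n\in\Pi_0(T)$ accumulating at $\lambda$ survive in $\sigma(T+F)$, but the tools you name do not deliver this. Invariance of $\sigma_w$ under the perturbation only tells you $\lambda_n\notin\sigma_w(T+F)$, which says nothing about whether $\lambda_n\in\sigma(T+F)$; and individual Riesz points \emph{can} be deleted by a commuting finite-rank perturbation (take $F=-P$ for a rank-one spectral projection $P$ of $T$), so the survival claim genuinely needs an argument.

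The missing ingredient is precisely the known equality $\mbox{acc}\,\sigma(T+F)=\mbox{acc}\,\sigma(T)$ for a finite-dimensional $F$ commuting with $T$, which is what the paper cites (\cite[Theorem 4.8]{22}) and then uses to finish in a few lines by an element-wise check of both inclusions $\Pi(T+F)=\Pi_{00}(T+F)$. If you prefer to prove rather than quote it, the correct mechanism is not Weyl-spectrum invariance but a spectral-projection rank count: each $\lambda_n\in\Pi_0(T)$ carries a nonzero finite-dimensional spectral subspace $X_n$ of $T$, invariant under $F$ (which commutes with the spectral projections), on which $\lambda_n I-T$ is nilpotent; if $\lambda_n\notin\sigma(T+F)$ then $(T+F-\lambda_n I)|_{X_n}$ is invertible, forcing $F|_{X_n}\neq 0$, and since the subspaces $X_n$ are linearly independent while $F$ has finite rank, only finitely many $\lambda_n$ can be removed from the spectrum. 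With that lemma supplied, your route through ``$T+F$ polaroid and finite-isoloid, then Theorem \ref{theorem3}(i)'' does close, and is essentially the paper's argument repackaged; without it, the proposal has a genuine gap at its acknowledged ``delicate step.''
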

\begin{proof}
Let $\lambda \in \Pi(T+F),$ then  $\lambda \notin$ acc $\sigma(T+F)$.  By \cite[Theorem 4.8]{22} we have acc $\sigma(T+F)=$ acc $\sigma(T)$ therefore, $\lambda \notin$ acc $\sigma(T)$. If $\lambda \in \sigma(T)$, then $\lambda\in $ iso $\sigma(T)$. As $T$ is polaroid, $\lambda \in \Pi(T)=\Pi_{00}(T).$ Therefore, $\lambda \notin \sigma_{b}(T)=\sigma_{b}(T+F)$. If $\lambda \notin \sigma(T)$, then  $\lambda \notin \sigma_{b}(T)= \sigma_{b}(T+F)$. Therefore, in both the cases we get, $\lambda \in \Pi_{00}(T+F).$ Similarly, we can prove that $\Pi_{00}(T+F) \subset \Pi(T+F)$.
 \end{proof}
\begin{theorem}\label{theorem14}
Let $T \in B(X)$ and $F$ be a finite-dimensional operator commuting with $T$. Suppose that $T^{*}$ has SVEP at $\lambda \notin \sigma_{ld}(T) $. If $T$ is $a$-polaroid satisfying property $(BgR)$, then $T+F$ satisfies property $(BgR)$.
\end{theorem}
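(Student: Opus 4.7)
The strategy is to chain Theorem \ref{theorem8}, Theorem \ref{theorem13}, and Theorem \ref{theorem2}, glued together by Lemma \ref{lemma1}. First, since $T$ is $a$-polaroid and satisfies $(BgR)$, Theorem \ref{theorem8} yields that $T$ satisfies $(BR)$. Because every $a$-polaroid operator is polaroid, Theorem \ref{theorem13} then applies to the commuting finite-dimensional perturbation $F$, giving that $T+F$ satisfies property $(BR)$.

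The remaining task is to upgrade $(BR)$ for $T+F$ to $(BgR)$ via Theorem \ref{theorem2}, which requires verifying that $(T+F)^{*}$ has SVEP at every $\lambda \notin \sigma_{ld}(T+F)$. By Lemma \ref{lemma1}, the SVEP hypothesis on $T^{*}$ is equivalent to $\sigma_{ld}(T)=\sigma_{d}(T)$, so the plan is to transfer this equality to $T+F$ and then invoke Lemma \ref{lemma1} in the reverse direction. Since $F$ is finite-dimensional and commutes with $T$, the standard perturbation results give $\sigma_{d}(T+F)=\sigma_{d}(T)$ and $\sigma_{ld}(T+F)=\sigma_{ld}(T)$; combining these with $\sigma_{ld}(T)=\sigma_{d}(T)$ yields $\sigma_{ld}(T+F)=\sigma_{d}(T+F)$, and Lemma \ref{lemma1} then delivers the required SVEP for $(T+F)^{*}$. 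Theorem \ref{theorem2} applied to $T+F$ finishes the argument.

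The main obstacle is the spectral-invariance step, especially $\sigma_{ld}(T+F)=\sigma_{ld}(T)$. If this equality is not available off the shelf, one can argue through the decomposition $\sigma_{ld}(S)=\sigma_{usbw}(S) \cup \mbox{acc} \thinspace \sigma_{a}(S)$ already used inside Lemma \ref{lemma1}: $\sigma_{usbw}$ is stable under commuting Riesz perturbations and $F$ is Riesz since it is finite-rank, while $\mbox{acc} \thinspace \sigma_{a}(T+F)=\mbox{acc} \thinspace \sigma_{a}(T)$ follows from commuting-finite-dimensional arguments analogous to the \cite[Theorem 4.8]{22} invocation in the proof of Theorem \ref{theorem13}. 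Once this spectral stability is secured, the rest of the proof is a formal composition of the earlier results; none of the intermediate statements requires any new polaroid-type computation on $T+F$ itself.
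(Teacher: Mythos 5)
Your route is genuinely different from the paper's, and its skeleton is viable. The paper also starts by getting property $(BR)$ for $T+F$ from Theorems \ref{theorem1} and \ref{theorem13}, but it then proves directly that $T+F$ is $a$-polaroid (using $\mbox{acc}\,\sigma_{a}(T+F)=\mbox{acc}\,\sigma_{a}(T)$, the stability of $\sigma_{b}$ under commuting finite-rank perturbations, and the equality $\sigma(T)=\sigma_{a}(T)$ extracted from the proof of Lemma \ref{lemma1}) and concludes via Theorem \ref{theorem8}. You instead transfer the dual SVEP condition to $T+F$ through Lemma \ref{lemma1} and finish with Theorem \ref{theorem2}. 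This works, but only because the equalities $\sigma_{ld}(T+F)=\sigma_{ld}(T)$ and $\sigma_{d}(T+F)=\sigma_{d}(T)$ for a commuting finite-dimensional $F$ really are theorems: they are (for commuting power finite-rank perturbations) the main results of \cite{19}, and you should invoke them as such rather than as ``standard perturbation results''. The trade-off is clear: the paper's argument needs only the elementary invariance of $\mbox{acc}\,\sigma_{a}$ and $\sigma_{b}$, while yours outsources the hard work to the B-Fredholm-type stability theorem of \cite{19}.

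The genuine flaw is in your fallback justification. The claim that $\sigma_{usbw}$ is stable under commuting Riesz perturbations is false: it already fails for commuting quasi-nilpotent perturbations. For instance, a non-nilpotent quasi-nilpotent operator $Q$ (an injective quasi-nilpotent weighted shift, say) commutes with $T=0$, and $Q$ need not be upper semi B-Fredholm, so $\sigma_{usbw}(Q)=\{0\}$ while $\sigma_{usbw}(0)=\emptyset$; this instability of the B-type spectra under commuting quasi-nilpotents is exactly why the paper's example after Theorem \ref{theorem10} exists and why Riesz perturbations are handled separately in Theorems \ref{theorem15} and \ref{theorem16}. What is true is stability under commuting (power) finite-rank perturbations, but that is precisely the nontrivial content of \cite{19}, not a consequence of any general Riesz principle, so the fallback cannot replace the citation. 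Alternatively, you could argue as in the paper's Theorem \ref{theorem16}: since $F^{*}$ is a commuting Riesz (indeed finite-rank) operator, \cite[Theorem 0.3]{6} gives that $(T+F)^{*}$ has SVEP at $\lambda$ whenever $T^{*}$ does, so you only need the single inclusion $\sigma_{ld}(T)\subseteq\sigma_{ld}(T+F)$ rather than the two equalities.
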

\begin{proof}
By Theorems \ref{theorem1} and \ref{theorem13}, $T+F$ satisfies property $(BR)$. By Theorem \ref{theorem8} it suffices to prove that $(T+F)$ is $a$-polaroid. Let $\lambda\in  $ iso $\sigma_{a}(T+F)$, then  $\lambda \notin $ acc $\sigma_{a}(T+F)=$ acc $\sigma_{a}(T).$ If $\lambda \in \sigma_{a}(T)$, then $\lambda\in  $ iso $\sigma_{a}(T)$. Since $T$ is $a$- Polaroid, $\lambda \in \Pi(T)=\Pi_{00}(T)$. Therefore, $\lambda \notin \sigma_{b}(T)=\sigma_{b}(T+F)$. If $\lambda \notin \sigma_{a}(T)$ and since $T^{*}$ has SVEP at $\lambda \notin \sigma_{ld}(T) $ then by the proof of Lemma \ref{lemma1} $\sigma(T)=\sigma_{a}(T)$ which gives $\lambda \notin \sigma(T)$. Therefore, $\lambda \notin \sigma_{b}(T)=\sigma_b(T+F)$.  Thus, in both the cases $\lambda$ is pole of resolvent of $T+F$.
\end{proof} 
Define 
$$\Pi_{0f}:=\{\lambda \in \mbox{iso} \thinspace \sigma(T):\alpha(\lambda I-T) < \infty\}.$$
\begin{lemma}\label{lemma3}
Let $T\in B(X)$ and  $K$ is a Riesz operator commuting with $T$, then $\Pi_{00}(T+K)$ $\cap$ $\sigma(T) \subset $  iso $\sigma(T).$
\end{lemma}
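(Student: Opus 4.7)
The plan is to fix $\lambda\in\Pi_{00}(T+K)\cap\sigma(T)$ and argue by contradiction, supposing that $\lambda\in$ acc $\sigma(T)$. Since $\lambda\in$ iso $\sigma(T+K)$, I would first choose $\varepsilon>0$ with $B(\lambda,\varepsilon)\cap\sigma(T+K)=\{\lambda\}$. Using the identity $\sigma_b(T)=\sigma_b(T+K)$ recorded at the start of this section for commuting Riesz perturbations, every $\mu\in B(\lambda,\varepsilon)\setminus\{\lambda\}$ lies outside $\sigma_b(T+K)=\sigma_b(T)$, so if $\mu\in\sigma(T)$ then $\mu\in\Pi_0(T)$ is an isolated pole of $T$ of finite rank. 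Under the contradiction hypothesis I can then extract distinct poles $\mu_n\to\lambda$ in $\Pi_0(T)\cap B(\lambda,\varepsilon)$ with mutually orthogonal finite-rank Riesz projections $P_n$; since $K$ commutes with $T$, each $P_n$ commutes with $K$, and the finite-dimensional subspace $P_nX$ is $K$-invariant.

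On each $P_nX$, writing $T|_{P_nX}=\mu_nI+N_n$ with $N_n$ nilpotent and commuting with $K|_{P_nX}$, the fact that a commuting nilpotent summand leaves the spectrum unchanged gives $\sigma((T+K)|_{P_nX})=\mu_n+\sigma(K|_{P_nX})$. Because this spectrum is contained in $\sigma(T+K)$ and $\sigma(T+K)$ meets $B(\lambda,\varepsilon)$ only at $\lambda$, each eigenvalue $k$ of $K|_{P_nX}$ satisfies either $k=\lambda-\mu_n$ or $|\mu_n+k-\lambda|\ge\varepsilon$. The first alternative produces a nonzero vector of $P_nX$ in $\ker(\lambda I-(T+K))$; by linear independence of the $P_nX$'s, this can occur for only finitely many $n$ before violating $\alpha(\lambda I-(T+K))<\infty$. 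So for $n$ large every eigenvalue of $K|_{P_nX}$ has modulus at least $\varepsilon/2$.

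To close, I would combine this with $\sigma(K|_{P_nX})\subset\sigma(K)$ (valid because $P_nX$ is finite-dimensional) and the Riesz property of $K$: the set $\sigma(K)\cap\{|z|\ge\varepsilon/2\}$ reduces to finitely many poles $E_1,\ldots,E_s$ of $K$ with finite-dimensional generalized eigenspaces $M_j=H_0(E_jI-K)$. The primary decomposition of $K|_{P_nX}$ then forces $P_nX\subset M_1+\cdots+M_s$ for all large $n$, a fixed finite-dimensional subspace, contradicting the linear independence of the infinitely many nonzero $P_nX$'s. I expect the main obstacle to be exactly this last pigeon-hole step: Browder-spectrum bookkeeping alone does not rule out an infinite sequence of poles of $T$ accumulating at $\lambda$ while $T+K$ has $\lambda$ isolated, and only by trapping the associated spectral subspaces of $T$ inside a fixed finite-dimensional sum of generalized eigenspaces of $K$ does the finite-nullity hypothesis $\alpha(\lambda I-(T+K))<\infty$ bite.
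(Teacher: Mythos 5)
Your argument is correct, but it takes a genuinely different route from the paper, which disposes of this lemma in one line by citing \cite[Lemma 3.3]{15}: that result gives the stronger inclusion $\Pi_{0f}(T+K)\cap\sigma(T)\subset\Pi_{0f}(T+K)\cap\sigma(T)\subset \mbox{iso}\,\sigma(T)$, where $\Pi_{0f}(T+K)=\{\lambda\in \mbox{iso}\,\sigma(T+K):\alpha(\lambda I-(T+K))<\infty\}$, and the lemma follows since $\Pi_{00}(T+K)\subset\Pi_{0f}(T+K)$. You instead reprove the perturbation fact from scratch: assuming $\lambda\in \mbox{acc}\,\sigma(T)$, you use $\sigma_{b}(T)=\sigma_{b}(T+K)$ to see that the points of $\sigma(T)$ in the punctured disc around $\lambda$ are Riesz points of $T$, take their mutually annihilating finite-rank spectral projections $P_{n}$ (which commute with $K$ because $K$ commutes with $T$), identify $\sigma\bigl((T+K)|_{P_{n}X}\bigr)=\mu_{n}+\sigma(K|_{P_{n}X})$ via the commuting nilpotent, and then play the isolation of $\lambda$ in $\sigma(T+K)$ against $\alpha(\lambda I-(T+K))<\infty$ and the Riesz structure of $K$ to trap all but finitely many of the independent subspaces $P_{n}X$ inside the fixed finite-dimensional space $M_{1}+\cdots+M_{s}$, a contradiction. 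Two small points are worth making explicit: the containment $\sigma\bigl((T+K)|_{P_{n}X}\bigr)\subset\sigma(T+K)$ holds because $P_{n}$ commutes with $T+K$, so $P_{n}X$ is a reducing subspace (or, more simply, because eigenvalues of the finite-dimensional restriction are eigenvalues of $T+K$); and your argument uses only the finiteness of $\alpha(\lambda I-(T+K))$, never its positivity, so you have in fact proved the stronger statement the paper imports from \cite{15}. What the citation buys is brevity; what your self-contained proof buys is transparency about exactly where the Riesz hypothesis on $K$ and the finite-nullity hypothesis on $\lambda$ enter.
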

\begin{proof}
 By \cite[Lemma 3.3]{15} we have $\Pi_{00}(T+K) \cap \sigma(T)\subset \Pi_{0f}(T+K) \cap \sigma(T) \subset $ iso $\sigma(T).$
\end{proof}
\begin{theorem}\label{theorem15}
Let $T \in B(X)$ and $K$ be a Riesz operator commuting with $T$. If $T$ is polaroid satisfying property $(BR)$,  then $T+K$ satisfies property $(BR)$ if and only if $\sigma_{d}(T)\subset \sigma_{d}(T+K).$
\end{theorem}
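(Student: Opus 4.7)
The plan is to reduce both implications to a single identity, namely $\sigma_d(T) = \sigma_b(T)$, which is forced by property $(BR)$ alone. Indeed, if $\lambda \in \Pi(T)$, then by $(BR)$ we have $\lambda \in \Pi_{00}(T)$, so $0 < \alpha(\lambda I - T) < \infty$, while being a pole gives $p(\lambda I - T) = q(\lambda I - T) < \infty$. Invoking \cite[Theorem 3.4]{1} yields $\beta(\lambda I - T) < \infty$, so $\lambda I - T$ is Browder; hence $\Pi(T) \subset \Pi_0(T)$, and together with the trivial reverse inclusion we get $\sigma_d(T) = \sigma_b(T)$. Since $K$ is Riesz and commutes with $T$, $\sigma_b(T) = \sigma_b(T+K)$, so
\[
\sigma_d(T) \;=\; \sigma_b(T) \;=\; \sigma_b(T+K) \;\supset\; \sigma_d(T+K).
\]
Consequently, the inclusion $\sigma_d(T) \subset \sigma_d(T+K)$ is equivalent to the equality $\sigma_d(T+K) = \sigma_b(T+K)$, i.e.\ to $\Pi(T+K) = \Pi_0(T+K)$.

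For the forward direction, if $T+K$ satisfies $(BR)$, then the same observation applied to $T+K$ gives $\sigma_d(T+K) = \sigma_b(T+K) = \sigma_b(T) = \sigma_d(T)$, so the required inclusion (in fact equality) holds. For the converse, assume $\sigma_d(T) \subset \sigma_d(T+K)$; by the reformulation above, $\Pi(T+K) = \Pi_0(T+K)$, and since $\Pi_0(T+K) \subset \Pi_{00}(T+K)$ is automatic, it remains to prove $\Pi_{00}(T+K) \subset \Pi_0(T+K)$. Let $\lambda \in \Pi_{00}(T+K)$. If $\lambda \notin \sigma(T)$, then $\lambda \notin \sigma_b(T) = \sigma_b(T+K)$. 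If $\lambda \in \sigma(T)$, then Lemma \ref{lemma3} gives $\lambda \in$ iso $\sigma(T)$, the polaroid hypothesis upgrades this to $\lambda \in \Pi(T)$, and property $(BR)$ for $T$ further gives $\lambda \in \Pi_{00}(T)$; combining $p(\lambda I - T) = q(\lambda I - T) < \infty$ with $\alpha(\lambda I - T) < \infty$ via \cite[Theorem 3.4]{1} makes $\lambda I - T$ Browder, so again $\lambda \notin \sigma_b(T) = \sigma_b(T+K)$. In either case $\lambda \in \Pi_0(T+K)$, as required.

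The main obstacle is the initial reduction: recognizing that $(BR)$ for $T$ already collapses $\sigma_d(T)$ to $\sigma_b(T)$, which is what converts the one-sided Drazin-spectrum inclusion into a symmetric equality and lets the Riesz-invariance of $\sigma_b$ bridge $T$ and $T+K$. Once this reformulation is in hand, both directions become routine: the forward one is immediate by symmetry, and the polaroid hypothesis enters only in the second case of the converse, in order to promote an isolated point of $\sigma(T)$ to a pole.
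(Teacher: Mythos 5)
Your proof is correct and follows essentially the same route as the paper: both hinge on the facts that poles of finite multiplicity are Browder points, that $\sigma_b$ is stable under commuting Riesz perturbations, and on Lemma \ref{lemma3} together with the polaroid hypothesis to obtain $\Pi_{00}(T+K)\subset \Pi_{0}(T+K)$ in the converse. Your upfront reduction to $\sigma_d(T)=\sigma_b(T)$ (which is exactly what the proof of Theorem \ref{theorem6}, i.e.\ property $(S)$, gives) only streamlines the paper's element-wise argument and makes explicit that the inclusion is in fact an equality; note merely that the fact you attribute to \cite[Theorem 3.4]{1} is the one the paper invokes as \cite[Corollary 3.21]{1}.
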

\begin{proof}
Suppose that $T+K$ satisfies property $(BR)$. Let $\lambda \notin \sigma_{d}(T+K).$ If $\lambda \in \sigma(T+K)$, then $\lambda \in \Pi(T+K)=\Pi_{00}(T+K)$. This implies that $\lambda \notin \sigma_{b}(T+K)=\sigma_{b}(T)$. If $\lambda \notin \sigma(T+K)$, then $\lambda \notin \sigma_{b}(T+K)=\sigma_{b}(T)$. Therefore, in both the cases $\lambda \notin \sigma_{d}(T)$.

Conversely, suppose that $\lambda\in \Pi(T+K)$, then $\lambda \notin \sigma_{d}(T+K)$. As $\sigma_{d}(T)\subset \sigma_{d}(T+K)$, $\lambda \notin \sigma_{d}(T).$ If $\lambda \in \sigma(T)$, then $\lambda \in \Pi(T)= \Pi_{00}(T)$. Therefore, $\lambda \notin \sigma_{b}(T)=  \sigma_{b}(T+K)$. This gives $\lambda \in \Pi_{00}(T+K)$. If $\lambda \notin \sigma(T)$, then also $\lambda \in \Pi_{00}(T+K)$. Now let $\lambda \in \Pi_{00}(T+K)$. If $\lambda \in \sigma(T)$, then by Lemma \ref{lemma3} $\lambda \in$ iso $\sigma(T)$. Since $T$ is polaroid, $\lambda \in \Pi(T)=\Pi_{00}(T)$. This implies that $\lambda \notin \sigma_{b}(T)=\sigma_{b}(T+K).$ Therefore, $\lambda \in \Pi(T+K)$. If $\lambda\notin \sigma(T)$ then it is easily seen that $\lambda \in \Pi(T+K)$. 
\end{proof} 
\begin{theorem}\label{theorem16}
Let $T \in B(X)$ and $K$ be a Riesz operator commuting with $T$. Suppose that $T^{*}$ has SVEP at $\lambda \notin \sigma_{ld}(T) $. If $T$ is polaroid satisfying property  $(BgR)$, then $T+K$ satisfies property $(BgR)$ if and only if $\sigma_{ld}(T) \subset \sigma_{ld}(T+K)$.
\end{theorem}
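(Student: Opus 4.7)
The plan is to mirror the strategy of Theorem \ref{theorem15}, replacing the pairing $(\Pi_{00},\Pi)$ with $(\Pi_{00},\Pi^{a})$, and to use the SVEP hypothesis to pull the approximate-point data for $T$ back to its ordinary spectrum. As preparation, since $T^{*}$ has SVEP at each $\lambda \notin \sigma_{ld}(T)$, Lemma \ref{lemma1} gives $\sigma_{a}(T)=\sigma(T)$ and $\sigma_{ld}(T)=\sigma_{d}(T)$; in particular iso $\sigma_{a}(T)=$ iso $\sigma(T)$, so the polaroid assumption on $T$ already makes $T$ an $a$-polaroid operator, and Theorem \ref{theorem1} yields property $(BR)$ for $T$, i.e.\ $\Pi(T)=\Pi_{00}(T)$. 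The Riesz perturbation identities $\sigma_{b}(T+K)=\sigma_{b}(T)$ and $\sigma_{ub}(T+K)=\sigma_{ub}(T)$ are used freely in what follows.

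For the forward direction, assume $T+K$ satisfies property $(BgR)$ and prove the contrapositive of $\sigma_{ld}(T)\subset\sigma_{ld}(T+K)$. Given $\lambda\notin\sigma_{ld}(T+K)$, split on whether $\lambda\in\sigma_{a}(T+K)$: if so, then $\lambda\in\Pi^{a}(T+K)=\Pi_{00}(T+K)$, whence $\lambda\notin\sigma_{b}(T+K)=\sigma_{b}(T)$ and the inclusion $\sigma_{ld}(T)\subset\sigma_{b}(T)$ closes the case; if not, then $\lambda I-(T+K)$ is bounded below, so $\lambda\notin\sigma_{ub}(T+K)=\sigma_{ub}(T)$, and $\sigma_{ld}(T)\subset\sigma_{ub}(T)$ finishes.

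For the converse, assume $\sigma_{ld}(T)\subset\sigma_{ld}(T+K)$ and prove both inclusions of $\Pi^{a}(T+K)=\Pi_{00}(T+K)$. If $\lambda\in\Pi^{a}(T+K)$ then the hypothesis forces $\lambda\notin\sigma_{ld}(T)=\sigma_{d}(T)$; whether $\lambda\in\sigma(T)$ (in which case polaroid together with $(BR)$ yield $\lambda\in\Pi_{00}(T)$) or $\lambda\notin\sigma(T)$, one obtains $\lambda\notin\sigma_{b}(T)=\sigma_{b}(T+K)$, and since $\lambda\in\sigma_{a}(T+K)$ this places $\lambda$ in $\Pi_{00}(T+K)$. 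Conversely, if $\lambda\in\Pi_{00}(T+K)$ and $\lambda\in\sigma(T)$, then Lemma \ref{lemma3} puts $\lambda\in$ iso $\sigma(T)$, so polaroid and $(BR)$ again produce $\lambda\notin\sigma_{b}(T)=\sigma_{b}(T+K)$, hence $\lambda\notin\sigma_{ld}(T+K)$; if $\lambda\notin\sigma(T)$ the same follows at once. The positivity of $\alpha(\lambda I-(T+K))$ then places $\lambda\in\sigma_{a}(T+K)$, so $\lambda\in\Pi^{a}(T+K)$.

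The main obstacle I expect is the bookkeeping of inclusions: because $\sigma_{ld}$ is \emph{not} Riesz-stable, left-Drazin information cannot be transferred directly between $T$ and $T+K$, and so the Riesz stability of $\sigma_{b}$ and $\sigma_{ub}$ (together with the chain $\sigma_{ld}\subset\sigma_{ub}\subset\sigma_{b}$) and the polaroid/$(BR)$ data for $T$ must each be invoked in the right branch of the case analysis. The SVEP hypothesis itself enters only at the outset, to identify $\sigma_{a}(T)=\sigma(T)$ and $\sigma_{ld}(T)=\sigma_{d}(T)$; the rest of the argument then takes place entirely in the ordinary spectrum.
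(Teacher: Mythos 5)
Your argument is correct, and the forward implication is essentially the paper's own (split on $\lambda\in\sigma_{a}(T+K)$, use $\Pi^{a}(T+K)=\Pi_{00}(T+K)$ and the Riesz stability of $\sigma_{b}$ and $\sigma_{ub}$ together with $\sigma_{ld}\subset\sigma_{ub}\subset\sigma_{b}$). The converse, however, takes a genuinely different route. The paper first shows $\sigma_{d}(T)\subset\sigma_{d}(T+K)$ via Lemma \ref{lemma1}, invokes Theorem \ref{theorem15} to get property $(BR)$ for $T+K$, and then upgrades to $(BgR)$ through Theorem \ref{theorem2} by transferring the localized SVEP of $T^{*}$ to $(T+K)^{*}$ using the Aiena--M\"uller result \cite[Theorem 0.3]{6} on commuting Riesz perturbations. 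You instead verify $\Pi^{a}(T+K)=\Pi_{00}(T+K)$ directly by an elementwise case analysis, using only Lemma \ref{lemma1} (to get $\sigma_{ld}(T)=\sigma_{d}(T)$), property $(BR)$ for $T$ (via Theorem \ref{theorem1}), the polaroid hypothesis, Lemma \ref{lemma3}, and the Riesz stability of $\sigma_{b}$ and $\sigma_{ub}$; this is more self-contained, avoiding Theorems \ref{theorem2} and \ref{theorem15} and the external SVEP-perturbation theorem, while the paper's route is shorter given that machinery and makes the role of the SVEP hypothesis for the adjoint of the perturbed operator explicit. One small point worth spelling out in your forward direction: the step from $\lambda\in\Pi^{a}(T+K)=\Pi_{00}(T+K)$ to $\lambda\notin\sigma_{b}(T+K)$ needs a word --- either argue as the paper does (Theorem \ref{theorem1} gives $(BR)$ for $T+K$, so $\lambda\in\Pi(T+K)$, and a pole with $0<\alpha<\infty$ is a Browder point), or note that a left pole with finite kernel lies outside $\sigma_{ub}$ and that isolatedness in $\sigma(T+K)$ gives SVEP of the adjoint there, forcing finite descent; as written the implication is asserted rather than justified, though at about the same level of terseness as the paper itself.
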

\begin{proof}
Suppose that $T+K$ satisfies property $(BgR)$. Let $\lambda \notin \sigma_{ld}(T+K)$. If  $\lambda \in \sigma_{a}(T+K)$, then $\lambda \in \Pi^{a}(T+K)=\Pi_{00}(T+K)$. By Theorem \ref{theorem1} $T+K$ satisfies property $(BR)$. Therefore, $\lambda \in \Pi(T+K)$ which implies that $\lambda \notin \sigma_{b}(T+K)=\sigma_{b}(T)$. This gives $\lambda \notin \sigma_{ld}(T)$. If $\lambda \notin \sigma_{a}(T+K)$, then $\lambda \notin \sigma_{ub}(T+K)=\sigma_{ub}(T)$. Hence, $\lambda \notin \sigma_{ld}(T)$.

Conversely, suppose that $\sigma_{ld}(T) \subset \sigma_{ld}(T+K)$. First we prove that $\sigma_{d}(T)\subset \sigma_{d}(T+K).$ Let $\lambda \notin \sigma_{d}(T+K)$ which implies that $\lambda \notin \sigma_{ld}(T+K).$ This gives $\lambda \notin \sigma_{ld}(T).$ By Lemma \ref{lemma1} $\lambda \notin \sigma_{d}(T).$  Then by Theorems \ref{theorem1} and \ref{theorem15}, $(T+K)$ satisfies property $(BR)$. By Theorem \ref{theorem2} for $T+K$ to satisfy property $(BgR)$ it suffices to prove that $(T+K)^{*}$ has SVEP at $\lambda \notin \sigma_{ld}(T+K)$. Let $\lambda \notin \sigma_{ld}(T+K)$. Since $\sigma_{ld}(T) \subset \sigma_{ld}(T+K)$, $\lambda \notin \sigma_{ld}(T)$. Therefore, $T^{*}$ has SVEP at $\lambda$. Since $K$ is a Riesz operator, $K^{*}$ is also Riesz operator. By \cite[Theorem 0.3]{6} $(T+K)^{*}$ has SVEP at $\lambda$.
\end{proof}
Recall that $T \in B(X)$ is said to be algebraic if there exists a non-constant polynomial $h$ such that $h(T)=0$. Nilpotent operator is trivial example of algebraic operator. If for some $n \in \mathbb{N}$, $K^{n}$ is finite dimensional then $K$ is an algebraic operator. Every  algebraic operator has finite spectrum.
\begin{lemma}\label{lemma4}
Let $T \in B(X)$ and $N$ be nilpotent operator commuting with $T$ If $T$ is polaroid and finite-isoloid, then $T+N$ is polaroid and finite-isoloid.
\end{lemma}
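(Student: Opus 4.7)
The plan is to reduce both statements to identities that were already invoked in the proof of Theorem \ref{theorem10}, namely $\Pi(T+N)=\Pi(T)$ from \cite[Lemma 2.2]{12} and $\Pi_{00}(T+N)=\Pi_{00}(T)$ from \cite[Theorem 3.5]{13}, together with the standard fact that a quasi-nilpotent operator commuting with $T$ leaves $\sigma(T)$ invariant, so that in particular $\sigma(T+N)=\sigma(T)$ and hence iso $\sigma(T+N)=$ iso $\sigma(T)$.

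First I would verify that $T+N$ is polaroid. Take any $\lambda\in$ iso $\sigma(T+N)$. By the spectral identity just noted, $\lambda\in$ iso $\sigma(T)$, and since $T$ is polaroid, $\lambda\in\Pi(T)$. Applying $\Pi(T+N)=\Pi(T)$ gives $\lambda\in\Pi(T+N)$, i.e.\ $\lambda$ is a pole of the resolvent of $T+N$. This establishes that $T+N$ is polaroid.

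Next I would verify the finite-isoloid property. Again let $\lambda\in$ iso $\sigma(T+N)=$ iso $\sigma(T)$. Since $T$ is finite-isoloid, we have $0<\alpha(\lambda I-T)<\infty$, so $\lambda\in\Pi_{00}(T)$. Using $\Pi_{00}(T+N)=\Pi_{00}(T)$ then forces $\lambda\in\Pi_{00}(T+N)$, which yields $0<\alpha(\lambda I-(T+N))<\infty$. Thus every isolated point of $\sigma(T+N)$ is an eigenvalue of finite multiplicity of $T+N$, i.e.\ $T+N$ is finite-isoloid.

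There is essentially no obstacle here: everything is a direct appeal to the two cited perturbation identities, with the only preliminary step being the passage between isolated points of $\sigma(T)$ and of $\sigma(T+N)$, which follows from commutation with the quasi-nilpotent (in fact nilpotent) $N$. The mild point to be careful about is to argue both halves of the conclusion from the same isolated point $\lambda$, so that the ``polaroid'' and ``finite-isoloid'' conditions are verified for the same spectral set iso $\sigma(T+N)$.
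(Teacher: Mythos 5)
Your proof is correct, and it takes a somewhat more direct route than the paper. The paper gets the polaroid half by citing the external perturbation result \cite[Theorem 2.10]{3} (polaroid is preserved by commuting nilpotents), and it gets the finite-isoloid half indirectly: from $\lambda\in\mbox{iso }\sigma(T+N)$ it uses polaroidness of $T+N$ together with the fact that $T+N$ satisfies property $(BR)$ (via Theorem \ref{theorem3}(i) for $T$ and the transfer result Theorem \ref{theorem10}), so that $\lambda\in\Pi(T+N)=\Pi_{00}(T+N)$. You instead work straight from the three identities $\sigma(T+N)=\sigma(T)$, $\Pi(T+N)=\Pi(T)$ and $\Pi_{00}(T+N)=\Pi_{00}(T)$ — the latter two being exactly the ingredients of Theorem \ref{theorem10} — obtaining the polaroid claim from $\lambda\in\mbox{iso }\sigma(T)\Rightarrow\lambda\in\Pi(T)=\Pi(T+N)$ and the finite-isoloid claim from $\lambda\in\Pi_{00}(T)=\Pi_{00}(T+N)$. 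Both arguments are sound; yours is more self-contained (it bypasses \cite[Theorem 2.10]{3} and the property-$(BR)$ detour and never needs $T+N$ to be polaroid before proving it), while the paper's version leans on already-established machinery and so reads shorter given its earlier theorems. Your closing remark about verifying both conclusions at the same $\lambda$ is harmless but not actually necessary, since the two properties are quantified separately over $\mbox{iso }\sigma(T+N)$.
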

\begin{proof}
Let $\lambda \in$ iso $\sigma(T+N)$. By \cite[Theorem 2.10]{3} $T+N$ is also polaroid. By Theorem \ref{theorem10} $T+N$ satisfies property $(BR)$.  This gives  $\lambda \in \Pi_{00}(T+N)$. Therefore, $0 < \alpha(\lambda I-T) < \infty.$ Hence, $T+N$ is finite-isoloid.
\end{proof}
Recall that $T \in B(X)$ hereditarily isoloid if every part of $T$ is  isoloid. We call $T \in B(X)$ hereditarily finite-isoloid if every part of $T$ is finite-isoloid. Obviously, hereditarily finite-isoloid is hereditarily isoloid.
\begin{theorem}\label{theorem17}
Let $T \in B(X)$ and $K$ be an algebraic operator commuting with $T$. Suppose that $T$ has SVEP. If $T$ is hereditarily polaroid and hereditarily finite-isoloid,  then $T+K$ satisfies property $(BR)$.
\end{theorem}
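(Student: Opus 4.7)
The plan is to reduce the theorem to Theorem~\ref{theorem3}(i) by establishing that $T+K$ is polaroid and finite-isoloid. The main tool is the primary spectral decomposition furnished by the algebraic operator $K$.

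First, since $K$ is algebraic, its spectrum is a finite set $\{\mu_1,\dots,\mu_n\}$ and its minimal polynomial factors as $h(z)=\prod_{i=1}^n(z-\mu_i)^{m_i}$. This produces an internal direct-sum decomposition $X=X_1\oplus\cdots\oplus X_n$ into closed subspaces $X_i:=N\bigl((K-\mu_iI)^{m_i}\bigr)$. Since $T$ commutes with $K$, it commutes with every polynomial in $K$, so each $X_i$ is invariant under both $T$ and $K$. I would write $T_i:=T|_{X_i}$, $N_i:=(K-\mu_iI)|_{X_i}$ (which is nilpotent) and $S_i:=(T+K)|_{X_i}=T_i+\mu_iI+N_i$, so that $T+K=\bigoplus_{i=1}^nS_i$.

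Next I would check that each $S_i$ is polaroid and finite-isoloid. Because $T_i$ is a part of $T$, the hereditary hypotheses give that $T_i$ is polaroid and finite-isoloid; SVEP is likewise inherited by $T_i$. Adding the scalar $\mu_iI$ preserves both properties, and Lemma~\ref{lemma4} applied to $T_i+\mu_iI$ together with the commuting nilpotent $N_i$ then yields that $S_i$ is polaroid and finite-isoloid.

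Now I assemble the direct sum. Since $\sigma(T+K)=\bigcup_i\sigma(S_i)$ and $N\bigl((\lambda I-(T+K))^k\bigr)=\bigoplus_iN\bigl((\lambda I-S_i)^k\bigr)$ for every $k$, any isolated point $\lambda$ of $\sigma(T+K)$ is, for each index $i$, either in the resolvent set of $S_i$ or an isolated point of $\sigma(S_i)$. In the first case $S_i-\lambda I$ is invertible; in the second the polaroid property of $S_i$ makes $\lambda$ a pole of $S_i$, while the finite-isoloid property gives $0<\alpha(\lambda I-S_i)<\infty$. Summing across $i$ shows that $\lambda$ is a pole of $T+K$ with $0<\alpha(\lambda I-(T+K))<\infty$, so $T+K$ is itself polaroid and finite-isoloid. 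Theorem~\ref{theorem3}(i) then delivers property $(BR)$ for $T+K$. The step requiring the most care is the direct-sum bookkeeping: ensuring that an isolated point of $\sigma(T+K)$ really splits correctly among the $\sigma(S_i)$ and that the nullities add to a finite positive number. The SVEP hypothesis on $T$ underpins the applicability of the perturbation machinery behind Lemma~\ref{lemma4}, since it is what transfers good spectral behaviour first to the parts $T_i$ and then to the nilpotent perturbations $T_i+\mu_iI+N_i$.
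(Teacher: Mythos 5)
Your argument is correct and follows the same skeleton as the paper's: both decompose $X$ along the spectral subspaces of the algebraic operator $K$ (your $X_i=N\bigl((K-\mu_iI)^{m_i}\bigr)$ are exactly the paper's $Y_j=H_0(\lambda_jI-K)=\ker(\lambda_jI-K)^{p_j}$), both write $(T+K)|_{X_i}$ as a part of $T$ plus a scalar plus a commuting nilpotent, both invoke Lemma~\ref{lemma4} on each summand, and both finish with Theorem~\ref{theorem3}(i). The one place you diverge is the polaroid property of $T+K$: the paper gets it by citing the perturbation theorem of Aiena--Alvis--Guill\'en--Pe\~na (their Theorem~2.15, whose hypotheses are precisely ``hereditarily polaroid plus SVEP''), whereas you derive it internally from the direct sum, observing that an isolated point of $\sigma(T+K)$ is, for each $i$, either in the resolvent set of $S_i$ or an isolated point of $\sigma(S_i)$, hence a pole of each relevant $S_i$, and that ascent, descent and nullity behave additively/maximally over a finite direct sum. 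That bookkeeping is valid, and it buys you a more self-contained proof in which the SVEP hypothesis is in fact never used. One small correction: your closing sentence claims SVEP ``underpins the applicability of the perturbation machinery behind Lemma~\ref{lemma4}'', but Lemma~\ref{lemma4} requires no SVEP (nilpotent perturbations preserve polaroid and $\Pi_{00}$ unconditionally); in the paper SVEP enters only through the cited theorem that you have replaced, so in your route it is simply an unused hypothesis rather than a load-bearing one.
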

\begin{proof}
Since $K$ is algebraic, spectrum of $K$ is finite, then there exists an integer $n \geq 1$ such that $\sigma(T)=\{\lambda_{1},\lambda_{2},\ldots ,\lambda_{n}\}.$ Let $Y_{j}= P_{j}(X)$, where $P_{j}$ denotes the spectral projection associated with $K$ and the spectral sets $\{\lambda_{j}\},$ then $X= \mathop{\oplus}\limits_{j=1}^n  Y_{j}$. Since $K$ is algebraic, each $\lambda_{j}$ is a pole of resolvent of $K$. Therefore, for each j there exists an integer $p_{j} \geq 1$ such that $Y_{j}=H_{0}(\lambda_{j} I-K)=$ ker $(\lambda_{j} I-K)^{p_{j}}.$ Let $K_{j}:=K|Y_{j}$ and  $T_{j}:=T|Y_{j}$. Since $T$ and $K$ commutes, $T_{j}$ and $K_{j}$ commutes. Clearly,
$$T+K = \mathop{\oplus}\limits_{j=1}^n  (T_{j} +K_{j}).$$ This implies that $\sigma(T+K)=\mathop{\cup}\limits_{j=1}^n  \sigma(T_{j}+K_{j})$. Let $N_{j}= \lambda_{j} I-K_{j}$, then by \cite[Theorem 2.15]{2} $N_{j}$ is nilpotent.  Now we prove that $T+K$ is finite-isoloid. Let $\lambda \in$ iso $\sigma(T+K)$, then $\lambda \in $ iso $\sigma(T_{j}+K_{j})$ for some $j=1,2,\ldots ,n$. Therefore, $\lambda -\lambda_{j} \in $ iso $\sigma(T_{j}+K_{j}-\lambda_{j} I)$. As $T_{j}$ is polaroid and finite-isoloid, then by Lemma \ref{lemma4} $T_{j}+K_{j}-\lambda_{j} I$ is polaroid and finite-isoloid. Therefore, $ 0 < \alpha( (\lambda -\lambda_{j}) I-(T_{j}+K_{j}-\lambda_{j})) < \infty $ which  implies that $0 <\alpha(( \lambda  I-(T_{j}+K_{j}))<\infty$.  Since $$\mbox{ker}(( \lambda  I-(T+K))= \mathop{\oplus}\limits_{j=1}^n \mbox{ker} (\lambda I-(T_{j}+K_{j}))$$ we have  $0< \alpha((\lambda  I-(T+K))< \infty $  Thus, $T+K$ is finite-isoloid. By \cite[Theorem 2.15]{2} $T+K$ is polaroid. Hence, by Theorem \ref{theorem3} $T+K$ satisfies property $(BR)$.
\end{proof}
Let $\mathcal{H}_{nc}(\sigma(T))$ denote the set of all complex valued analytic functions defined on an open neighborhood of $\sigma(T)$, such that $f$ is injective and non constant on each of the components of its domain. For $f \in \mathcal{H}_{nc}(\sigma(T))$, define $f(T)$ using classical functional calculus.
\begin{lemma}\label{lemma5}
Let $T \in B(X)$. If $T$ is  polaroid and finite-isoloid then $f(T)$ is  polaroid and finite-isoloid for all $f \in \mathcal{H}_{nc}(\sigma(T)).$
\end{lemma}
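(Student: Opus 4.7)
The plan is to combine the spectral mapping theorem $\sigma(f(T))=f(\sigma(T))$ with the local injectivity of $f$ on each component of its domain, so that every isolated point of $\sigma(f(T))$ comes from finitely many isolated points of $\sigma(T)$, where the polaroid and finite-isoloid hypotheses can be exploited through the holomorphic functional calculus. Concretely, I would fix $\mu\in\mbox{iso}\,\sigma(f(T))$ and analyse $f^{-1}(\mu)\cap\sigma(T)$: since $\sigma(T)$ is compact it meets only finitely many components of the domain of $f$ and, on each such component, $f$ is injective and non-constant, so $f^{-1}(\mu)\cap\sigma(T)=\{\lambda_1,\ldots,\lambda_n\}$ is a finite set. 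Choosing a neighborhood $V$ of $\mu$ with $V\cap\sigma(f(T))=\{\mu\}$ and pulling back by continuity, one obtains pairwise disjoint neighborhoods of the $\lambda_i$ in which no other points of $\sigma(T)$ lie, so each $\lambda_i\in\mbox{iso}\,\sigma(T)$. Because $T$ is polaroid and finite-isoloid, every $\lambda_i$ is a pole of the resolvent of $T$ with $0<\alpha(\lambda_i I-T)<\infty$.

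Next I would show that $\mu$ is a pole of the resolvent of $f(T)$. On a sufficiently small open neighborhood $U$ of $\sigma(T)$ the function $f-\mu$ vanishes only at $\lambda_1,\ldots,\lambda_n$, and so admits a factorization $f(z)-\mu=\prod_{i=1}^n(z-\lambda_i)^{k_i}g(z)$ with $g$ analytic and non-vanishing on $U$. By the functional calculus, $f(T)-\mu I=\prod_{i=1}^n(T-\lambda_i I)^{k_i}g(T)$ with $g(T)$ invertible. Let $P_i$ denote the spectral projection of $T$ associated with $\lambda_i$ and put $X_i=P_i(X)$. Because $\lambda_i$ is a pole, $(T-\lambda_i I)|_{X_i}$ is nilpotent, hence $(f(T)-\mu I)|_{X_i}$ is nilpotent; on the complementary subspace $Z=(I-\sum_i P_i)(X)$, $\mu\notin\sigma(f(T)|_Z)$, so $(f(T)-\mu I)|_Z$ is invertible. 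Combining these two pieces, $p(f(T)-\mu I)=q(f(T)-\mu I)<\infty$ and $\mu$ is a pole of $f(T)$, so $f(T)$ is polaroid.

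For the finite-isoloid conclusion, I would verify that $\alpha(\mu I-f(T))$ is both finite and positive. Finiteness of $\alpha(\lambda_i I-T)$ forces $\alpha((\lambda_i I-T)^k)\leq k\alpha(\lambda_i I-T)<\infty$ for every $k$, via the standard injection $N(S^{j+1})/N(S^j)\hookrightarrow N(S^j)/N(S^{j-1})$ induced by $S=\lambda_i I-T$; in particular each $X_i=N((\lambda_i I-T)^{p_i})$ is finite-dimensional, and so $\ker(\mu I-f(T))\subset\bigoplus_i X_i$ is finite-dimensional. Positivity is immediate: if $Tx=\lambda_i x$ then $f(T)x=f(\lambda_i)x=\mu x$, so $0\neq\ker(\lambda_i I-T)\subset\ker(\mu I-f(T))$.

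The main obstacle is the second step: choosing $U$ so that $f-\mu$ has no extraneous zeros and validating the direct-sum decomposition through the idempotents $P_i$ inside the holomorphic functional calculus. Once that is set up, the conclusions about $\alpha(\mu I-f(T))$ reduce to routine bookkeeping of ascents and dimensions.
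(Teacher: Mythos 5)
Your argument is correct, but it takes a genuinely different route from the paper. The paper disposes of the polaroid part by citing a known result (Lemma 3.11 of Aiena--Aponte--Balzan) and then gets finite-isoloidness almost for free: if $\lambda\in\mathrm{iso}\,\sigma(f(T))$ then $\lambda=f(\mu)$ for some $\mu\in\mathrm{iso}\,\sigma(T)$, the hypotheses give $\mu\notin\sigma_b(T)$, and the spectral mapping theorem for the Browder spectrum, $f(\sigma_b(T))=\sigma_b(f(T))$, together with injectivity of $f$ yields $\lambda\notin\sigma_b(f(T))$, hence $0<\alpha(\lambda I-f(T))<\infty$. You instead reprove everything from scratch: you factor $f(z)-\mu=\prod_i(z-\lambda_i)^{k_i}g(z)$ on a neighborhood of $\sigma(T)$ with $g$ non-vanishing, split $X$ by the Riesz projections at the finitely many isolated preimages $\lambda_i$, observe that $f(T)-\mu I$ is nilpotent on each spectral subspace $X_i$ and invertible on the complement, and bound $\dim X_i$ by $\alpha((\lambda_iI-T)^{p_i})\le p_i\,\alpha(\lambda_iI-T)<\infty$. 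All the steps check out (the isolatedness of each $\lambda_i$, the choice of $U$ avoiding extraneous zeros, and the direct-sum bookkeeping are all legitimate, the first two because the zeros of the non-constant $f-\mu$ cannot accumulate in the open domain containing the compact $\sigma(T)$). What your approach buys is self-containedness and a little extra generality: except for the trivial finiteness of $f^{-1}(\mu)\cap\sigma(T)$, you never really use injectivity of $f$, only non-constancy on components, whereas the paper's short proof leans essentially on injectivity (to conclude $f(\mu)\notin f(\sigma_b(T))$) and on two imported results (the polaroid transfer lemma and the Browder spectral mapping theorem). What the paper's route buys is brevity. One stylistic remark: your phrase ``pulling back by continuity'' is the one place worth tightening --- the clean statement is that any $z_n\in\sigma(T)$ with $z_n\to\lambda_i$, $z_n\ne\lambda_i$, would satisfy $f(z_n)=\mu$ eventually (since $\mu$ is isolated in $\sigma(f(T))=f(\sigma(T))$), contradicting the isolation of the zeros of $f-\mu$.
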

\begin{proof}
By \cite[Lemma 3.11]{21} we know that $f(T)$ is  polaroid. Let $\lambda \in$ iso$\sigma(f(T))$. The proof of \cite[Lemma 3.11]{21} shows that $\lambda \in f$(iso $ \sigma(T))$. Therefore, there exists $\mu \in$ iso $ \sigma(T)$ such that $f(\mu)=\lambda$. Since $T$ is polaroid and finite isoloid, $0 < p(\lambda I-T)= q(\lambda I-T)< \infty$ and $0 < \alpha(\lambda I-T) < \infty$. This gives $\mu \notin \sigma_{b}(T).$ As $f$ is injective, $\lambda =f(\mu) \notin f(\sigma_{b}(T))$. It is  known that  $f(\sigma_{b}(T))= \sigma_{b}(f(T))$. This gives $\lambda \notin \sigma_{b}(f(T))$. Thus, $0 < \alpha(\lambda I-f(T)) < \infty$. Hence, $f(T)$ is finite isoloid.
\end{proof}
\begin{theorem} 
Let $T \in B(X)$ and $K$ be an algebraic operator commuting with $T$. Suppose that $T$ has SVEP. If $T$ is hereditarily polaroid and hereditarily finite-isoloid,  then $f(T+K)$ satisfies property $(BR)$ for all $f \in \mathcal{H}_{nc}(\sigma(T)).$ 
\end{theorem}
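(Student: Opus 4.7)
The plan is to piece together three earlier results: Theorem \ref{theorem17} (which really establishes that $T+K$ is polaroid and finite-isoloid under our hypotheses), Lemma \ref{lemma5} (which lifts the polaroid/finite-isoloid property through $f \in \mathcal{H}_{nc}(\sigma(T))$), and Theorem \ref{theorem3}(i) (which converts the polaroid/finite-isoloid property into property $(BR)$). So the theorem will not require any genuinely new argument; the work is in checking that the hypotheses propagate correctly to $f(T+K)$.

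First, I would observe that the proof of Theorem \ref{theorem17} actually shows more than its statement: by the spectral decomposition $T+K = \oplus_{j=1}^{n}(T_j + K_j)$ with each $T_j + K_j - \lambda_j I$ being $T_j$ plus a nilpotent commuting with it, Lemma \ref{lemma4} gives that each summand is polaroid and finite-isoloid, and then the direct-sum argument in the proof of Theorem \ref{theorem17} transfers these properties to $T+K$ itself. Thus $T+K$ is polaroid and finite-isoloid.

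Next I would apply Lemma \ref{lemma5} to $S := T+K$: since $S$ is polaroid and finite-isoloid, $f(S) = f(T+K)$ is polaroid and finite-isoloid for every $f \in \mathcal{H}_{nc}(\sigma(T+K))$. A minor point to address is the domain of $f$: by hypothesis $f \in \mathcal{H}_{nc}(\sigma(T))$, but $K$ is algebraic (hence $\sigma(K)$ is finite) and commutes with $T$, so $\sigma(T+K)$ is a finite union of translates of $\sigma(T)$ by eigenvalues of $K$, and one may restrict or extend $f$ suitably (or interpret the statement as $f$ being analytic on an open neighborhood of $\sigma(T+K)$) so that $f(T+K)$ is well-defined via the functional calculus.

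Finally, an application of Theorem \ref{theorem3}(i) to $f(T+K)$ yields that $f(T+K)$ satisfies property $(BR)$, which is the desired conclusion. I do not anticipate a real obstacle; the only subtlety is the domain issue for $f$ noted above, which is a matter of bookkeeping rather than mathematics.
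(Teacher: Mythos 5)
Your proposal is correct and follows essentially the same route as the paper: Theorem \ref{theorem17} gives that $T+K$ is polaroid and finite-isoloid, Lemma \ref{lemma5} transfers this to $f(T+K)$, and Theorem \ref{theorem3}(i) then yields property $(BR)$. The domain point you raise about $f$ being defined near $\sigma(T)$ versus $\sigma(T+K)$ is a genuine bit of bookkeeping that the paper itself silently glosses over, and your reading (interpret $f$ as analytic on a neighborhood of $\sigma(T+K)$) is the right fix.
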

\begin{proof}
By Theorem \ref{theorem17} $T+K$ is polaroid and finite- isoloid. Therefore, by Lemma \ref{lemma5} $f(T+K)$ is  polaroid and finite-isoloid. Hence, by Theorem \ref{theorem3} $f(T+K)$ satisfies property $(BR)$.
\end{proof}

\textbf{Acknowledgement} The corresponding author is supported by Department of Science and Technology, New Delhi, India (Grant No. DST/INSPIRE Fellowship/[IF170390]).

\end{document}